\newtheorem{lemma}{Lemma}[section]
\newtheorem{theorem}[lemma]{Theorem}
\newtheorem{proposition}[lemma]{Proposition}
\newtheorem{corollary}[lemma]{Corollary}
\theoremstyle{definition}
\newtheorem{example}[lemma]{Example}
\newtheorem*{remark}{Remark}
\numberwithin{equation}{section}
\newcommand{\comment}[1]{}
\newcommand{\cL}{{\mathcal L}}
\newcommand{\cQ}{{\mathcal Q}}
\newcommand{\cD}{{\mathcal D}}
\newcommand{\ds}{\delta_\sigma}
\newcommand{\R}{{\mathbb R}}
\newcommand{\N}{{\mathbb N}}
\newcommand{\supp}{{\mathrm {supp}\,}}
\newcommand{\av}[1]{\left\Vert #1\right\Vert}
\newcommand{\aV}[1]{\Vert #1 \Vert}
\newcommand{\Hm}[1]{\leavevmode{\marginpar{\tiny%
$\hbox to 0mm{\hspace*{-0.5mm}$\leftarrow$\hss}%
\vcenter{\vrule depth 0.1mm height 0.1mm width \the\marginparwidth}%
\hbox to 0mm{\hss$\rightarrow$\hspace*{-0.5mm}}$\\\relax\raggedright
#1}}}
\begin{document}

\title{Harris' criterion and Hardy inequalites on graphs}

\author{Simon Murmann}
\address{Institut für Mathematik, Friedrich-Schiller-Universität Jena, 07743 Jena, Germany.} \email{simon.murmann@uni-jena.de}

\author{Marcel Schmidt}
\address{Mathematisches Institut, Universität Leipzig, 04109 Leipzig, Germany.} \email{marcel.schmidt@math.uni-leipzig.de}

\maketitle

\begin{abstract}
In this paper we give a version of Harris' criterion for determining $H^{1,p}_0$ within $H^{1,p}$ on discrete spaces. Moreover, we provide a converse via Hardy inequalities involving distances to metric boundaries.   
\end{abstract}

%\subjclass[2000]{Primary 39A12; Secondary 58J35}
%\date{\today}

%\tableofcontents

%%%%%%%%%%%%%%%%%%%%%%%%%%%%%%%%%%%%%%%%%%%%%%%%%%%%%%%%%
%%%%%%%%%%%%%%%%%%%%%%%%%%%%%%%%%%%%%%%%%%%%%%%%%%%%%%%%% 

\section{Introduction}

Sobolev spaces can be defined over various geometric spaces that admit a suitable notion of weak derivatives. Among them are Riemannian manifolds, discrete graphs, metric graphs and fractals - just to name a few. We are mostly interested in rough geometries, hence  this text focuses on first order Sobolev spaces.  Given a geometric space $X$ as discussed above the first order Sobolev spaces come in pairs: 
\begin{itemize}
 \item  $H^{1,p}(X)$ - the space of weakly differentiable $L^p$-functions with gradient in $L^p$.
 \item $H_0^{1,p}(X)$ - the closure of compactly supported $H^{1,p}(X)$-functions (or even smoother functions) in $H^{1,p}(X)$.
\end{itemize}
Typically $H_0^{1,p}(X)$ can be thought of being the space of those functions in $H^{1,p}(X)$ that vanish at 'the boundary' of $X$, in the sense that their trace is zero. While this statement can be made precise in great generality by choosing an appropriate notion of boundary and  trace functional, in general it is still hard to determine when a function from $H^{1,p}(X)$ actually belongs to $H^{1,p}_0(X)$. On domains in Euclidean space  there is a convenient criterion due to Harris: Let $\emptyset \neq \Omega \subseteq \R^n$ be a bounded domain and let
$$\delta \colon \Omega \to \R, \quad  \delta(x) = {\rm dist}(x,\partial \Omega)$$
denote the distance to the boundary. Then for $1 < p < \infty$  a function $f \in H^{1,p}(\Omega)$ with $f/\delta \in  L^p(\Omega)$ belongs to $H^{1,p}_0(\Omega)$, see \cite[Theorem~V.3.4 and Remark~V.3.5]{EE}. A converse to this result holds true under some regularity assumptions on $\partial \Omega$ (e.g. Lipschitz boundary is sufficient). Regularity is used to establish a Hardy inequality with weight $\delta^{-p}$, which implies $f/\delta \in L^p(\Omega)$ for $f \in H^{1,p}_0(\Omega)$, see \cite{KK66} and  also \cite{BEL15} for a more recent account on the subject.  Extensions of these results weakening the assumption $f/\delta \in L^p(\Omega)$  have recently gained some attention, see  \cite{KM97,EN17,NT22}. In this paper we extend Harris' criterion and its converse to discrete graphs. 

It turns out that precisely two properties of $\delta$ are required to establish Harris' criterion: One needs $|\nabla \delta| \leq 1$, which is a consequence of Rademacher's theorem, and that for all $\varepsilon > 0$ closed bounded subsets of $\{x \in \Omega \mid \delta(x) \geq \varepsilon\}$ are compact in $\Omega$. On graphs the first property is achieved by considering intrinsic metrics, while the second property will remain a mild assumption on the metric. Our discrete version of Harris' criterion is contained in Theorem~\ref{thm:Theorem 1} and Corollary~\ref{corollary:corollary  1}. 

For the converse to Harris' criterion we need to establish a Hardy inequality with respect to the distance to the boundary. Recently Hardy inequalities on discrete spaces have recieved quite some attention, see \cite{KPP18,KPP18a, FKP19,KPP20,KPP20a,KPP21,Fis22a,Fis22b,Fis22c}.  We rely on some methods developed in \cite{Fis22a,Fis22b,Fis22c} to obtain a Hardy inequality that yields the  converse to Harris' criterion in the discrete setting, see Theorem~\ref{thm:boundary hardy}, Corollary~\ref{corollary:corollary  2} and Corollary~\ref{corollary:corollary  3}. The  assumption  we have to make is some form of superharmonicity of the distance to the boundary. This is similar to the situation on Euclidean spaces, in which case superharmonicity of $\delta$ is related to convexity of the domain $\Omega$. We refer to \cite{BFT03} for how superharmonicity is employed to prove Hardy inequalities and how this is related to convexity of $\Omega$ when considering $\delta$. That the convexity of $\Omega$ can be replaced by regularity of the boundary $\partial \Omega$ is due to the fact that close to the boundary $\delta$ is equivalent to another superharmonic distance function as long as $\partial \Omega$ is sufficiently regular, see e.g. \cite{KK66}. It seems hopeless to extend this geometric reasoning to discrete spaces and hence we use superharmonicity  as an assumption. 

% The required boundedness of the oscillation along edges of the distance to the boundary is a phenomenon that occurs on discrete spaces and is due to the non-local nature of the involved Laplacian.   

%Our results in Theorem~\ref{thm:Theorem 1} and Theorem~\ref{thm:boundary hardy} are not limited to distances to boundaries, but work for more general classes of funcions. 

In this text we discuss discrete graphs but our results are valid in  more general situations. More precisely, our methods should yield versions of Harris' criterion  on regular Dirichlet spaces with suitable intrinsic metrics as in \cite{FLW} (in this case $p = 2$) and on suitable metric measure spaces with $p$-Cheeger energies as in \cite{GP20}.  In particular, the aforementioned Riemannian manifolds, metric graphs and some fractals are included in these settings.   Hardy inequalities  with respect to the distance to the boundary for a large class of linear local operators  were recently obtained \cite{KN23} and our results may help to establish them for certain non-local  operators as well.

We chose to focus on discrete graphs for a couple of reasons: The discreteness of the underlying space removes the need to discuss local regularity issues and within our framework  there is an easy explicit description of $H^{1,p}(X)$.  For regular Dirichlet spaces, which are  abstract versions of $H^{1,2}_0(X)$-spaces, there are constructions of $H^{1,2}(X)$-spaces via so-called reflected Dirichlet spaces, see \cite{Kuw,Schmi2}. Local regularity issues and the use of reflected Dirichlet spaces make technical details more lengthy but do not add insights on which type of results we could expect.  In contrast to this simplification, the non-local nature of discrete gradients and Laplacians makes proving Harris' criterion and Hardy inequalites more involved when it comes to certain estimates. This is due to the lack of a chain rule. Overcoming these difficulties in the discrete setting possibly paves the way for other non-local situations, e.g. fractional Laplace-type operators on various spaces.

Parts of this paper are based on SM Bachelor's thesis, whose main result is a preliminary version of Corollary~\ref{corollary:corollary  1}.

{\bf Acknowledgements.} M.S. thanks Florian Fischer and Matthias Keller for inspring discussions about Hardy inequalities. Moreover, M.S. acknowledges financial support of the DFG within the priority programme 'Geometry at Infinity'.

\section{Set Up and  Main Results}

Let $X \neq \emptyset$ be a countable set equipped with the discrete topology. Let $m \colon X \to (0,\infty)$. We abuse notation and also view $m$ as a measure on all subsets of $X$ by 
$$m(A) = \sum_{x \in A} m(x), \quad A \subseteq X. $$
Let $1 \leq p < \infty$. The discrete Lebesgue spaces are given by
$$\ell^p(X,m) = \{f \colon X \to \R \mid \sum_{x \in X} |f(x)|^p m(x) < \infty\}$$
equipped with the norm
$$\av{f}_p = \left(\sum_{x \in X} |f(x)|^p m(x)\right)^{1/p}.$$
The space of bounded functions is denoted by $\ell^\infty(X)$.  We write $C(X)$ for all real-valued functions on $X$. Due to $X$ being discrete the support of $f \in C(X)$ is given by ${\rm supp} f = \{x \in X \mid f(x) \neq 0\}$ and we write $C_c(X)$ for all real valued functions of finite support.  For $f,g \in C(X)$ we let $f \wedge g = \min \{f,g\}$ and $f \vee g = \max \{f,g\}$, which are defined pointwise.

A {\em weighted graph} on $X$ is a symmetric function $b \colon X \times X \to [0,\infty)$ such that $b(x,x) = 0$ and   
$$\deg(x) := \sum_{y \in X} b(x,y) < \infty$$
for each $x \in X$. Two points $x,y \in X$ are said to be {\em connected by an edge} if  $b(x,y) > 0$. In this case, we write $x \sim y$. The graph $b$ is called {\em locally finite} if for each $x \in X$ the set of its neighbors 
$$\{y \in X \mid x \sim y\}$$
is finite. A {\em path} connecting $x,y \in X$ is a finite  sequence $\gamma = (x_0,\ldots,x_n)$ in $X$ with  $x = x_0 \sim x_1 \sim \ldots \sim x_n = y$. The graph $b$ is called {\em connected} if all $x,y \in X$ are connected by a path.

The {\em formal $p$-Laplacian} is the nonlinear operator $\mathcal L \colon \mathcal F \to C(X)$ with domain
$$\mathcal F = \mathcal F_p = \{f \in C(X) \mid \sum_{y \in X} b(x,y) |f(x) - f(y)|^{p-1} < \infty \text{ for all } x \in X\}, $$
on which it acts by
$$\mathcal L f(x) = \frac{1}{m(x)} \sum_{y \in X} b(x,y) (f(x) - f(y))|f(x) - f(y)|^{p-2} .$$
Here we use the convention $0 /0 = 0$, which may occur in the definition of $\mathcal L$ whenever $1 \leq p < 2$. Note that $\ell^\infty(X) \subseteq \mathcal F_p$ due to the summability condinition on $b$. 
 
The {\em $p$-energy functional} is given by
$$\cQ \colon C(X) \to [0,\infty], \quad \cQ(f) = \frac{1}{p}\sum_{x,y \in X} b(x,y) |f(x) - f(y)|^p. $$
For $f \in C(X)$ we define the norm of the {\em discrete $p$-gradient} by
$$|\nabla f| \colon X \to [0,\infty],\quad  |\nabla f|(x) = \left(\frac{1}{m(x)} \sum_{y \in X} b(x,y) |f(x) - f(y)|^p\right)^{1/p}.$$
Clearly, the energy and the gradient are related through the formula
$$\cQ(f) = \frac{1}{p}\sum_{x \in  X} |\nabla f|^p(x) m(x).$$
We denote the space of {\em functions of finite $p$-energy} by 
$$\cD = \{f \in C(X) \mid \sum_{x,y \in X} b(x,y)|f(x) - f(y)|^p < \infty \} $$ 
and write 
$$H^{1,p}(X,m) = \cD \cap \ell^p(X,m)$$
for the corresponding first order {\em discrete Sobolev space}. It is a Banach space when equipped with the norm $\av{\cdot}_{H^{1,p}(X,m)}$ defined by 
$$\av{f}_{H^{1,p}(X,m)}^p = \mathcal Q(f) + \av{f}^p_p.$$
We write $H_0^{1,p}(X,m)$ for the closure of $C_c(X)$ in $H^{1,p}(X,m)$. 

The main goal of this paper is to characterize the space $H_0^{1,p}(X,m)$ within $H^{1,p}(X,m)$. To this end we employ the geometry induced by certain metrics that are compatible with the gradient. They are discussed next.

A {\em pseudo metric} on $X$ is a symmetric function $\sigma \colon X \times X \to [0,\infty)$ satisfying the triangle inequality. It is called {\em $p$-intrinsic} if 
$$\frac{1}{m(x)} \sum_{y \in X} b(x,y) \sigma(x,y)^p \leq 1$$
for every $x \in  X$. The most important feature of intrinsic pseudo metrics is the following Rademacher type result: If $\sigma$ is a $p$-intrinsic metric and $f \colon X \to \R$ is $L$-Lipschitz with respect to $\sigma$, then $|\nabla f| \leq L$.

\begin{remark}
In the case $p = 2$ intrinsic metrics were introduced in \cite{FLW} for regular Dirichlet forms mimicking the property that $1$-Lipschitz functions on Riemannian manifolds have distributional gradient bounded by $1$. We refer to \cite{Kel} for their use on graphs.  On spaces with a local Laplacian  (e.g. on Riemnnian manifolds) the norm of the gradient does not depend on $p$. In this case, one intrinsic metric can be employed for for all $1 \leq p < \infty$. In the discrete setting (and other non-local settings as well) intrinsic metrics may depend on the choice of $p$. 
\end{remark}

\begin{example}[Path pseudo metrics] \label{example:path metrics}
 Assume the graph $b$ is connected and let $w \colon X \times X \to [0,\infty)$ be symmetric. To a path $\gamma = (x_0,\ldots,x_n)$ we associate its {\em length} (with respect to $w$) by 
 $$L_w(\gamma) = \sum_{k = 1}^n w(x_{k-1},w_k).$$
 The function
 $$d_w \colon X \times X \to [0,\infty), \quad d_w(x,y) = \inf \{L_w(\gamma) \mid \gamma \text{ path connecting }x \text{ and }y\}$$
 is called the {\em path pseudo metric} induced by $w$. It is readily verified that $d_w$ is a pseudo metric. If the graph is locally finite and $w(x,y) > 0$ for all $x \sim y$, then $d_w$ is indeed a metric inducing the discrete topology on $X$.  For $w = 1$ the metric $d:= d_1$ is called {\em combinatorial distance}, as it counts the least number of edges in a path connecting $x,y$. 
 
 The path pseudo metric $d_w$ satisfies $d_w(x,y) \leq w(x,y)$. Hence, it is $p$-intrinsic if 
 $$\frac{1}{m(x)} \sum_{y \in X} b(x,y) w(x,y)^p \leq 1.$$
 One function $w$ satisfying this inequality is given by 
 $$w(x,y) = \frac{m(x)^{1/p}}{\deg(x)^{1/p}} \wedge  \frac{m(y)^{1/p}}{\deg(y)^{1/p}}.  $$
 The induced metric will be investigated in further examples below. 
 
 The combinatorial distance $d$ satisfies $d(x,y) = 1$ for $x \sim y$. Hence, it is $p$-intrinsic if and only if $ \deg/m$ is bounded by $1$. 
\end{example}

Let $\sigma$ be a pseudo metric. We write $\overline{X}^\sigma$ for the completion of $X$ with respect to $\sigma$ and denote by $\partial_\sigma X = \overline{X}^\sigma \setminus X$ the {\em Cauchy boundary} of $X$ with respect to $\sigma$ (if $\sigma$ is not a metric consider a suitable quotient space). Clearly, $\sigma$ can be uniquely extended to $\overline{X}^\sigma$. For $A \subseteq \overline{X}^\sigma$ we let 
$$\sigma(x,A) = \inf \{\sigma(x,y)\mid y \in A\} $$
with the convention $\sigma(x,A) = \infty$ if $A = \emptyset$. In what follows we will be interested in the behavior of the pseudo metric near the boundary. Therefore, we fix a cut-off $D > 0$ and introduce
$$\delta_\sigma \colon X \to [0,\infty),\quad \delta_\sigma(x) = \sigma(x,\partial_\sigma X) \wedge D, $$
where our convention implies $\delta_\sigma = D$ if $\partial_\sigma X = \emptyset$. Note that $\delta_\sigma(x) > 0$ for all $x \in X$ if and only if $\partial_\sigma X$ is closed in $\overline{X}^\sigma$.  The function $\ds$ is $1$-Lipschitz with respect to $\sigma$. Hence, if $\sigma$ is $p$-intrinsic, then $|\nabla \ds| \leq 1$.

The following is one  of the two main results of this paper.

\begin{theorem} \label{thm:Theorem 1}
 Let $1 < p < \infty$ and let $\sigma$ be a $p$-intrinsic pseudo metric.  Let $h \colon X \to (0,\infty)$  such that for all
$ \varepsilon > 0$ the function $\deg/m$ is bounded on $\sigma$-bounded subsets of $$\{x \in X \mid h(x) \geq \varepsilon\}.$$
Then 
$$\{f \in H^{1,p}(X,m) \mid \frac{|\nabla h|}{h} f \in \ell^p(X,m)\} \subseteq H_0^{1,p}(X,m).$$
\end{theorem}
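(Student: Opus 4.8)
The plan is to approximate any $f$ satisfying the hypotheses in the norm $\av{\cdot}_{H^{1,p}(X,m)}$ by finitely supported functions, by means of two successive cut-offs: a first one that removes the mass of $f$ near the Cauchy boundary (this is where the Harris hypothesis $\tfrac{|\nabla h|}{h}f\in\ell^p(X,m)$ enters), and a second one that removes the mass ``at infinity'' inside a superlevel set $\{h\ge\eps^2\}$ (this is where the $p$-intrinsic property and the boundedness of $\deg/m$ enter). No reduction on $f$ (sign or boundedness) is needed.

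For the first cut-off, fix $\eps\in(0,1)$ and set $\eta_\eps=\Phi_\eps\circ\log h$, where $\Phi_\eps\colon\R\to[0,1]$ is piecewise affine, equal to $0$ on $(-\infty,2\log\eps]$, equal to $1$ on $[\log\eps,\infty)$, and of slope $(\log\eps^{-1})^{-1}$ in between; thus $\eta_\eps=1$ on $\{h\ge\eps\}$, $\eta_\eps=0$ on $\{h\le\eps^2\}$, and $\supp\eta_\eps\subseteq\{h\ge\eps^2\}$. Put $f_\eps=f\eta_\eps$. Since $|f_\eps|\le|f|$ and $\eta_\eps\to1$ pointwise, dominated convergence gives $\av{f-f_\eps}_p\to0$. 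For the energy I would write $\zeta_\eps=1-\eta_\eps$ and expand the product,
$$|f\zeta_\eps(x)-f\zeta_\eps(y)|\le|f(x)|\,|\eta_\eps(x)-\eta_\eps(y)|+\zeta_\eps(y)\,|f(x)-f(y)|,$$
so that $\cQ(f-f_\eps)=\cQ(f\zeta_\eps)$ is controlled by $S_\eps=\sum_{x,y}b(x,y)|f(x)|^p|\eta_\eps(x)-\eta_\eps(y)|^p$ and $T_\eps=\sum_{x,y}b(x,y)\zeta_\eps(y)^p|f(x)-f(y)|^p$. Here $T_\eps\to0$ by dominated convergence, its summand being dominated by $b(x,y)|f(x)-f(y)|^p$ (summable since $f\in\cD$) and tending to $0$ pointwise.

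The hard part is $S_\eps\to0$; this is exactly where the lack of a discrete chain rule bites. Using that $\Phi_\eps$ is $(\log\eps^{-1})^{-1}$-Lipschitz together with $|\log h(x)-\log h(y)|\le|h(x)-h(y)|/(h(x)\wedge h(y))$, I get $|\eta_\eps(x)-\eta_\eps(y)|\le(\log\eps^{-1})^{-1}|h(x)-h(y)|/(h(x)\wedge h(y))$. The denominator $h(x)\wedge h(y)$ (rather than $h(x)$) is the obstruction, since the hypothesis only controls $\sum_{x,y}b(x,y)|f(x)|^p|h(x)-h(y)|^p/h(x)^p=\av{\tfrac{|\nabla h|}{h}f}_p^p<\infty$. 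I would split $S_\eps$ according to whether $h(x)\le h(y)$ or $h(x)>h(y)$. On $\{h(x)\le h(y)\}$ one has $h(x)\wedge h(y)=h(x)$ and the contribution is $\le(\log\eps^{-1})^{-p}\av{\tfrac{|\nabla h|}{h}f}_p^p\to0$. On $\{h(x)>h(y)\}$ I relabel $x\leftrightarrow y$ by symmetry of $b$ and bound $|f(y)|^p\le2^{p-1}(|f(x)|^p+|f(x)-f(y)|^p)$: the $|f(x)|^p$-part is again $\le(\log\eps^{-1})^{-p}\av{\tfrac{|\nabla h|}{h}f}_p^p$, while the $|f(x)-f(y)|^p$-part vanishes by dominated convergence using $|\eta_\eps(x)-\eta_\eps(y)|\le1$. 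Hence $S_\eps\to0$ and $f_\eps\to f$ in $H^{1,p}(X,m)$.

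It remains to show $f_\eps\in H_0^{1,p}(X,m)$ for each fixed $\eps$, as this space is closed. Fix $o\in X$ and for $R>0$ set $\chi_R=(1\wedge(2-\sigma(o,\cdot)/R))\vee0$, which is $(1/R)$-Lipschitz with respect to $\sigma$. With $g_R=f_\eps\chi_R$, the same product expansion together with the $p$-intrinsic inequality $\sum_y b(x,y)\sigma(x,y)^p\le m(x)$ gives $\sum_{x,y}b(x,y)|f_\eps(x)|^p|\chi_R(x)-\chi_R(y)|^p\le R^{-p}\av{f_\eps}_p^p$, the complementary term again vanishing by dominated convergence; thus $g_R\to f_\eps$ in $H^{1,p}(X,m)$. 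Each $g_R$ is supported in $B=\{\sigma(o,\cdot)\le2R\}\cap\{h\ge\eps^2\}$, a $\sigma$-bounded subset of $\{h\ge\eps^2\}$, on which the hypothesis gives $\deg/m\le K$. For any $u$ supported in $B$ this yields $\cQ(u)\le\frac{2^p}{p}\sum_x|u(x)|^p\deg(x)\le\frac{2^pK}{p}\av{u}_p^p$, so that on functions supported in $B$ the $H^{1,p}$-norm is dominated by the $\ell^p$-norm; truncating $g_R$ along a finite exhaustion of $B$ then converges in $H^{1,p}(X,m)$, so $g_R\in H_0^{1,p}(X,m)$. Chaining the three approximations gives $f\in H_0^{1,p}(X,m)$. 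The main obstacle is the estimate $S_\eps\to0$ in the first cut-off, namely converting the pointwise hypothesis into a bound with the correct denominator in the absence of a chain rule; the second cut-off becomes routine once the $\ell^p$-controls-energy bound on $B$ is in place.
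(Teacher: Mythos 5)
Your proof is correct, but it follows a genuinely different route from the paper's. The paper uses a \emph{single} cutoff $f_n=(\chi_n/h)f$ with $\chi_n=(h-1/n)_+\cdot\bigl((2-\sigma(\cdot,o)/n)_+\wedge 1\bigr)$, and it does \emph{not} prove that $f_n\to f$ in norm: it only establishes the uniform energy bound $C\,\cQ(f_n)\le \cQ(f)+\av{f}_p^p+\aV{f|\nabla h|/h}_p^p$ and then concludes via the pointwise lower semicontinuity of $\cQ_0$ (Proposition~\ref{prop:lower semicontinuity}) together with the identification $H^{1,p}_0(X,m)=\mathcal D_0\cap\ell^p(X,m)$ (Proposition~\ref{prop:H0 equals D0 cap ellp}); both of these rest on uniform convexity, weak compactness and Mazur's lemma (Proposition~\ref{prop:uniform convexity}). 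You instead perform two successive cutoffs and obtain actual $H^{1,p}$-norm convergence at each stage, so that only the closedness of $H^{1,p}_0(X,m)$ is needed at the end. The mechanism that makes this work is your logarithmic cutoff $\Phi_\eps\circ\log h$: its Lipschitz constant $(\log\eps^{-1})^{-1}\to 0$ turns the Harris hypothesis into genuine smallness of the gradient term rather than mere boundedness, and your symmetrization plus the splitting $|f(y)|^p\le 2^{p-1}(|f(x)|^p+|f(x)-f(y)|^p)$ correctly resolves the $h(x)\wedge h(y)$ denominator issue caused by the missing chain rule (the paper circumvents the same issue differently, via the factor $\chi_n(x)/h(x)\le 1$ in its product-rule expansion). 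Your final step rederives exactly the content of Lemma~\ref{lemma:sets of bounde deg}. What each approach buys: the paper's cutoff and estimates are cruder and shorter, but need the reflexivity scaffolding (hence genuinely require $p>1$); your argument is elementary and self-contained --- only dominated convergence and triangle inequalities --- gives strong approximation by the explicit cutoffs, and in fact works verbatim for $p=1$, at the price of the cleverer cutoff and the case-splitting estimate.
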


\begin{remark}
\begin{enumerate}[(a)]
 \item  The possibility that  $|\nabla h|(x) = \infty$ for some $x \in X$ is allowed in previous theorem. In this case, the condition $\frac{|\nabla h|}{h} f \in \ell^p(X,m)$ is to be understood as $f(x) = 0$ for $x \in \{y \in X \mid |\nabla h|(y) = \infty\} =: N$ and $ 1_{X \setminus N} f |\nabla h|/h \in \ell^p(X,m)$.
 \item The theorem holds true for all $h \in C_0(X)$, which is the uniform closure of $C_c(X)$ in $\ell^\infty$. In this case, $\{x \in X \mid h(x) \geq \varepsilon\}$ is finite for any $\varepsilon > 0$. 
\end{enumerate}

\end{remark}

We can apply this theorem to the bounded $1$-Lipschitz function $h = \ds$ (and powers thereof). Since $\ds > 0$ if $\partial_\sigma X$ is closed and $|\nabla \ds| \leq 1$,  we obtain the following geometric criterion. 

\begin{corollary}[Harris' criterion for discrete spaces] \label{corollary:corollary  1}
 Let $1 < p < \infty$ and let $\sigma$ be a $p$-intrinsic pseudo metric with the following properties: 
 \begin{itemize} 
  \item $\partial_\sigma X$ is closed. 
  \item For all $ \varepsilon > 0$ the function $\deg/m$ is bounded on $\sigma$-bounded subsets of $$\{x \in X \mid \ds(x) \geq \varepsilon\}.$$ 
 \end{itemize}
Then for any $\alpha > 0$ we have
$$\{f \in H^{1,p}(X,m) \mid \frac{|\nabla \ds^\alpha|}{\ds^\alpha} f \in \ell^p(X,m)\} \subseteq H_0^{1,p}(X,m).$$
In particular, if $f \in  H^{1,p}(X,m)$ satisfies $f/\ds \in \ell^p(X,m)$, then $f \in H^{1,p}_0(X,m)$. 
\end{corollary}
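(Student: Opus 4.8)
The plan is to deduce the corollary from Theorem~\ref{thm:Theorem 1} by specializing the auxiliary function to $h = \ds^\alpha$. The intrinsic property of $\sigma$ is assumed, so the only points to check are that $h$ is strictly positive and that $\deg/m$ is bounded on $\sigma$-bounded subsets of the super-level sets of $h$. I would first record that, since $\partial_\sigma X$ is closed in $\overline{X}^\sigma$, one has $\ds(x) > 0$ for every $x \in X$, and therefore $h = \ds^\alpha > 0$ as required.

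Next I would identify the super-level sets. For any $\varepsilon > 0$ the monotonicity of $t \mapsto t^\alpha$ gives
$$\{x \in X \mid \ds^\alpha(x) \geq \varepsilon\} = \{x \in X \mid \ds(x) \geq \varepsilon^{1/\alpha}\},$$
so the boundedness condition on $\deg/m$ demanded by Theorem~\ref{thm:Theorem 1} for $h = \ds^\alpha$ at level $\varepsilon$ is exactly the second bullet point of the corollary at level $\varepsilon^{1/\alpha}$. As $\varepsilon$ ranges over all positive reals so does $\varepsilon^{1/\alpha}$, hence the two families of conditions coincide and the hypotheses of Theorem~\ref{thm:Theorem 1} are met. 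Applying the theorem with $h = \ds^\alpha$ then yields the displayed inclusion, with the convention for points where $|\nabla \ds^\alpha| = \infty$ understood as in the remark following Theorem~\ref{thm:Theorem 1}.

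Finally, for the ``in particular'' statement I would take $\alpha = 1$. Because $\ds$ is $1$-Lipschitz with respect to the $p$-intrinsic metric $\sigma$, the Rademacher-type bound gives $|\nabla \ds| \leq 1$; in particular the exceptional set $N = \{x \in X \mid |\nabla \ds|(x) = \infty\}$ is empty, so no convention is needed. Pointwise we then have $\frac{|\nabla \ds|}{\ds}|f| \leq \frac{|f|}{\ds}$, whence $f/\ds \in \ell^p(X,m)$ forces $\frac{|\nabla \ds|}{\ds} f \in \ell^p(X,m)$, and the already established inclusion (for $\alpha = 1$) delivers $f \in H^{1,p}_0(X,m)$. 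Given Theorem~\ref{thm:Theorem 1}, there is no substantial obstacle here: the entire analytic difficulty sits in the theorem, while the only care required in the corollary is the elementary bookkeeping with the level sets $\{\ds^\alpha \geq \varepsilon\}$ together with the use of the bound $|\nabla \ds| \leq 1$.
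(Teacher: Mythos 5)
Your proposal is correct and follows essentially the same route as the paper, which obtains the corollary by applying Theorem~\ref{thm:Theorem 1} to $h = \ds^\alpha$, using that $\partial_\sigma X$ closed gives $\ds > 0$ and that the Rademacher-type property of the $p$-intrinsic metric gives $|\nabla \ds| \leq 1$. Your explicit bookkeeping with the level sets $\{\ds^\alpha \geq \varepsilon\} = \{\ds \geq \varepsilon^{1/\alpha}\}$ is exactly the (unwritten) verification the paper leaves to the reader.
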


As noted above, if  $\partial_\sigma X = \emptyset$, then $\delta_\sigma = D$. In this case,  the second assumption on $\sigma$ states that $\deg/m$ should be bounded on $\sigma$-balls.  Pseudo metric spaces with finite balls are always complete and $\deg/m$ is always bounded on finite sets. Hence, we obtain the following corollary.

\begin{corollary} \label{corollary:H = H0}
Let $1 < p < \infty$. If there exists a $p$-intrinsic pseudo metric with finite balls, then $H^{1,p}(X,m) = H^{1,p}_0(X,m)$.
\end{corollary}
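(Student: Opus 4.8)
The plan is to deduce this corollary from Corollary~\ref{corollary:corollary 1} by verifying its two hypotheses and then arranging that the Harris inclusion collapses to the whole Sobolev space. Suppose $\sigma$ is a $p$-intrinsic pseudo metric with finite balls, meaning every set of the form $\{y \in X \mid \sigma(x,y) \leq r\}$ is finite. First I would check that the Cauchy boundary is empty. Indeed, a pseudo metric space in which all balls are finite is complete: any $\sigma$-Cauchy sequence is $\sigma$-bounded, hence eventually lies in a finite ball, and a Cauchy sequence taking finitely many values must be eventually constant, so it converges in $X$. Therefore $\partial_\sigma X = \emptyset$, and in particular $\partial_\sigma X$ is (trivially) closed, which gives the first hypothesis of Corollary~\ref{corollary:corollary 1}.

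Next I would verify the second hypothesis. Since $\partial_\sigma X = \emptyset$, the convention in the definition of $\delta_\sigma$ gives $\delta_\sigma = D$ identically, so for any $\varepsilon \in (0,D]$ the superlevel set $\{x \in X \mid \delta_\sigma(x) \geq \varepsilon\}$ is all of $X$, while for $\varepsilon > D$ it is empty. Thus the required boundedness of $\deg/m$ must be checked on arbitrary $\sigma$-bounded subsets of $X$. But a $\sigma$-bounded subset is contained in some ball $\{y \mid \sigma(x_0,y) \leq r\}$, which by assumption is finite; and $\deg/m$, being a real-valued function, is automatically bounded on any finite set. Hence the second hypothesis holds as well.

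With both hypotheses in hand, Corollary~\ref{corollary:corollary 1} (applied, say, with $\alpha = 1$) yields
$$
\{f \in H^{1,p}(X,m) \mid \tfrac{|\nabla \delta_\sigma|}{\delta_\sigma} f \in \ell^p(X,m)\} \subseteq H^{1,p}_0(X,m).
$$
Now I would exploit that $\delta_\sigma \equiv D$ is constant, so $|\nabla \delta_\sigma| \equiv 0$, and therefore the weight $|\nabla \delta_\sigma|/\delta_\sigma$ vanishes identically. Consequently the condition $\tfrac{|\nabla \delta_\sigma|}{\delta_\sigma} f \in \ell^p(X,m)$ reduces to $0 \in \ell^p(X,m)$, which every $f$ satisfies. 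The set on the left is then all of $H^{1,p}(X,m)$, giving $H^{1,p}(X,m) \subseteq H^{1,p}_0(X,m)$. The reverse inclusion $H^{1,p}_0(X,m) \subseteq H^{1,p}(X,m)$ holds by definition, as $H^{1,p}_0(X,m)$ is the closure of $C_c(X)$ inside $H^{1,p}(X,m)$, so equality follows.

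I do not anticipate a serious obstacle here, as the corollary is essentially a degenerate special case of Corollary~\ref{corollary:corollary 1}. The only point requiring mild care is the completeness argument showing $\partial_\sigma X = \emptyset$; one must handle the pseudo metric case (where $\sigma(x,y) = 0$ need not imply $x = y$) by passing to the associated quotient, but since distinct points of $X$ at $\sigma$-distance zero still lie in the same finite ball, the finiteness of balls and hence completeness is unaffected.
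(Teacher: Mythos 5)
Your proposal is correct and follows essentially the same route as the paper: finite balls imply completeness of the pseudo metric space, hence $\partial_\sigma X = \emptyset$ and $\delta_\sigma \equiv D$, the hypotheses of Corollary~\ref{corollary:corollary  1} hold since $\deg/m$ is bounded on finite sets, and the vanishing weight $|\nabla \delta_\sigma|/\delta_\sigma \equiv 0$ makes the Harris inclusion cover all of $H^{1,p}(X,m)$. Your extra care with the pseudo metric quotient and the completeness argument only spells out what the paper leaves implicit.
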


Applying the theorem to $\sigma = 0$ and $h = 1$ we immediately obtain the following. 

\begin{corollary}
Let $1 < p < \infty$ and assume that $\deg/m$ is bounded. Then $H^{1,p}(X,m) = H^{1,p}_0(X,m)$.
\end{corollary}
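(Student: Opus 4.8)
The plan is to deduce this directly from Theorem~\ref{thm:Theorem 1} by choosing the degenerate data $\sigma = 0$ (the zero pseudo metric) and $h \equiv 1$. Since the inclusion $H^{1,p}_0(X,m) \subseteq H^{1,p}(X,m)$ holds by definition, only the reverse inclusion requires proof, and I would obtain it by showing that with these choices the hypotheses of Theorem~\ref{thm:Theorem 1} amount precisely to the boundedness of $\deg/m$, while the conclusion becomes $H^{1,p}(X,m) \subseteq H^{1,p}_0(X,m)$.

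First I would check that $\sigma = 0$ is a $p$-intrinsic pseudo metric: it is symmetric and satisfies the triangle inequality trivially, and $\frac{1}{m(x)}\sum_{y} b(x,y)\, 0^p = 0 \leq 1$ for every $x \in X$. For this pseudo metric every subset of $X$ has diameter $0$, hence is $\sigma$-bounded. The constant function $h \equiv 1$ takes values in $(0,\infty)$ as required, and $\{x \in X \mid h(x) \geq \varepsilon\}$ equals $X$ for $0 < \varepsilon \leq 1$ and is empty for $\varepsilon > 1$. Consequently the assumption of Theorem~\ref{thm:Theorem 1} --- that for every $\varepsilon > 0$ the function $\deg/m$ be bounded on the $\sigma$-bounded subsets of $\{x \in X \mid h(x) \geq \varepsilon\}$ --- reduces exactly to the boundedness of $\deg/m$ on $X$, which is our standing hypothesis.

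It remains to identify the left-hand side of the inclusion provided by Theorem~\ref{thm:Theorem 1}. Since $h$ is constant we have $|\nabla h|(x) = \bigl(\frac{1}{m(x)}\sum_{y} b(x,y)\,|1-1|^p\bigr)^{1/p} = 0$ for all $x \in X$, so that $\frac{|\nabla h|}{h} f = 0 \in \ell^p(X,m)$ for every $f \in H^{1,p}(X,m)$. Thus the set $\{f \in H^{1,p}(X,m) \mid \frac{|\nabla h|}{h} f \in \ell^p(X,m)\}$ is all of $H^{1,p}(X,m)$, and Theorem~\ref{thm:Theorem 1} yields $H^{1,p}(X,m) \subseteq H^{1,p}_0(X,m)$. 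Combined with the trivial reverse inclusion this gives the claimed equality. There is no genuine obstacle here: the entire analytic content sits in Theorem~\ref{thm:Theorem 1}, and the only thing to be careful about is verifying that the degenerate choices $\sigma = 0$ and $h \equiv 1$ are admissible and collapse the hypothesis to exactly the stated boundedness condition on $\deg/m$.
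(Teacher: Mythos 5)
Your proof is correct and is exactly the paper's argument: the paper derives this corollary by applying Theorem~\ref{thm:Theorem 1} with $\sigma = 0$ and $h \equiv 1$, which is precisely your choice of data. Your write-up merely spells out the routine verifications (that $\sigma = 0$ is $p$-intrinsic, that the hypothesis collapses to boundedness of $\deg/m$ on $X$, and that $|\nabla h| = 0$ makes the weighted condition vacuous) that the paper leaves implicit.
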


\begin{remark}
 \begin{enumerate}[(a)]
  \item In the Euclidean setting mentioned in the introduction we have $|\nabla \delta| = 1$ almost surely and  $\nabla \delta^\alpha = \alpha \delta^{\alpha - 1} \nabla \delta$ by the chain rule.  Hence,  there is no gradient term and no exponent in Harris criterion, cf.  \cite[Theorem~V.3.4]{EE}.
  
  \item For $p = 2$ Corollary~\ref{corollary:H = H0} was first established in \cite{HKMW} under a slightly different assumption. We refer to \cite[Theorem~5.4]{Sch20} for related results. 
  \item The condition on $\deg/m$ used in Corollary~\ref{corollary:corollary  1} is fullfilled if bounded subsets of $\{x \in X \mid \ds(x) \geq \varepsilon\}$ are finite.  The latter condition along with some examples is discussed in \cite[Section~5]{Sch20}. We will give more examples in Section~\ref{section:examples}.
 \end{enumerate}

\end{remark}

Under a superharmonicity assumption on $\ds$ but without assuming $\sigma$ is intrinsic the converse of Harris' criterion holds true. Indeed, instead of considering $\ds$ one can actually use any strictly positive function. The following is the second main result of this paper.

\begin{theorem}[Hardy inequality]\label{thm:boundary hardy}
Let $1 <  p < \infty$ and let $h \colon X \to (0,\infty)$ with $h \in \mathcal F_p \cap \mathcal F_{p/2 + 1}$ such that there exists a finite $K \subseteq X$ and  $\lambda \in \R$ with  $\cL h \geq \lambda h^{p-1}$ on $X \setminus K$. Then there exists $C >0$ such that 
$$ C \sum_{x \in X} |f(x)|^p \frac{|\nabla h^{1/2}|^p(x)}{h(x)^{p/2}} m(x) \leq \mathcal Q(f) + \aV{f}_p^p$$
 for all $f \in H_0^{1,p}(X,m)$. In particular, 
$$H_0^{1,p}(X,m) \subseteq \{f \in H^{1,p}(X,m) \mid \frac{|\nabla h^{1/2}|}{h^{1/2}} f \in \ell^p(X,m)\}.$$
If, additionally,  $\sup\{h(x)/h(y) \mid x,y \in X,\, x \sim y\} < \infty$,  then there exists $C' > 0$ such that 
$$ C' \sum_{x \in X} |f(x)|^p \frac{|\nabla h|^p(x)}{h(x)^{p}} m(x) \leq \mathcal Q(f) + \aV{f}_p^p$$
 for all $f \in H_0^{1,p}(X,m)$.
\end{theorem}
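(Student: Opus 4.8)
The plan is to prove the first inequality for $f \in C_c(X)$ and then pass to all of $H_0^{1,p}(X,m)$ by density. For $f \in C_c(X)$ I may assume $f \ge 0$, since the left-hand side depends only on $|f|$ while $\cQ(|f|) \le \cQ(f)$ and $\aV{|f|}_p = \aV{f}_p$. To extend the resulting inequality to $f \in H_0^{1,p}(X,m)$, I would pick $f_n \in C_c(X)$ with $f_n \to f$ in $\|\cdot\|_{H^{1,p}(X,m)}$; then $\cQ(f_n) + \aV{f_n}_p^p \to \cQ(f) + \aV{f}_p^p$ by continuity of the norm, while a subsequence converges pointwise, so Fatou's lemma applied to the nonnegative weighted sum on the left gives the inequality for $f$.

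Write $s = h^{1/2}$ and $g = f/s$, so the left-hand side becomes $\sum_{x,y} b(x,y)\,|g(x)|^p\,|s(x)-s(y)|^p$. Before estimating it, I would record that the hypotheses $h \in \cF_p \cap \cF_{p/2+1}$ are exactly what make every term finite: from $|\sqrt{a}-\sqrt{b}| \le \sqrt{|a-b|}$ one gets $|s(x)-s(y)|^p \le |h(x)-h(y)|^{p/2}$, so $h \in \cF_{p/2+1}$ forces $|\nabla s|(x) < \infty$ and hence a finite Hardy weight, whereas $|s(x)-s(y)|^{p-1} \le s(x)^{-(p-1)}|h(x)-h(y)|^{p-1}$ shows $h \in \cF_p$ forces $s \in \cF_p$, so that $\cL s$ is defined and the ground-state machinery may be applied to $s$.

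The heart of the proof is a nonlocal estimate coming from the supersolution property, for which I would follow the methods of \cite{Fis22a,Fis22b,Fis22c}. The crucial point, and the main obstacle, is that the weight cannot be produced by a pointwise comparison: the quantity $\cL s/s^{p-1}$ may be strictly smaller than $|\nabla s|^p/s^p$ at an individual vertex (already for a single edge with $s(y) > s(x)$, e.g. $p=4$, $s(x)=1$, $s(y)=2$ gives $\cL s/s^{p-1} = -1 < 1 = |\nabla s|^p/s^p$), so there is no edgewise domination, and one checks that the naive splitting $s(y)(g(x)-g(y)) = (f(x)-f(y)) - g(x)(s(x)-s(y))$ only leads back to the same sum modulo $\cQ(f)$; this is precisely where the lack of a chain rule bites. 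Instead the estimate must be carried out for the full double sum. Concretely, I expect to derive from $\cL h \ge \lambda h^{p-1}$ a lower bound of the form
$$p\cQ(f) \ge c \sum_{x,y} b(x,y)\,|g(x)|^p\,|s(x)-s(y)|^p - C_0 \aV{f}_p^p,$$
the nonlinear analogue of the Keller--Pinchover--Pogorzelski ground-state representation, which for $p=2$ reduces to the exact identity $s\,\cL s = \tfrac12 |\nabla s|^2 + \tfrac12 \cL h$ tested against $f^2$. Finding the correct convexity inequality that replaces this identity for general $p$ is the technical core. The finitely many vertices in $K$ only contribute a bounded multiple of $\aV{f}_p^p$, because on the finite set $K$ the weight $|\nabla s|^p/s^p$ is bounded (again by $\cF_{p/2+1}$) and the terms with $\cL h < \lambda h^{p-1}$ are absorbed there; this is exactly why the inequality carries the additive $\aV{f}_p^p$ rather than being a clean spectral bound.

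Finally, the second inequality follows from the first by a genuinely \emph{pointwise} comparison of the two weights. Using $|h(x)-h(y)| = |s(x)-s(y)|\,(s(x)+s(y))$ one computes
$$\frac{|\nabla h|^p(x)}{h(x)^p} = \frac{1}{m(x)}\sum_{y} b(x,y)\,\frac{|s(x)-s(y)|^p}{s(x)^p}\left(\frac{s(x)+s(y)}{s(x)}\right)^p,$$
where the factor $\frac{1}{m(x)}\sum_y b(x,y)\,|s(x)-s(y)|^p/s(x)^p$ is exactly $|\nabla h^{1/2}|^p(x)/h(x)^{p/2}$ and the remaining factor equals $(1 + (h(y)/h(x))^{1/2})^p$. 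The latter is bounded on edges precisely when $\sup\{h(x)/h(y) \mid x \sim y\} < \infty$, so $|\nabla h|^p/h^p \le C'' |\nabla h^{1/2}|^p/h^{p/2}$ pointwise, and the second inequality is immediate from the first with $C' = C/C''$.
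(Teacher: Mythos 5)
Your proposal has a genuine gap at its core: the first Hardy inequality is never actually proved, not even for $\varphi \in C_c(X)$. At the decisive moment you write that you ``expect to derive'' (with $s = h^{1/2}$, $g = f/s$) the ground-state-type lower bound
\begin{equation*}
p\,\cQ(f) \;\geq\; c \sum_{x,y \in X} b(x,y)\,|g(x)|^p\,|s(x)-s(y)|^p \;-\; C_0 \aV{f}_p^p
\end{equation*}
from $\cL h \geq \lambda h^{p-1}$, and you yourself flag that ``finding the correct convexity inequality'' replacing the $p=2$ identity is ``the technical core.'' But that convexity inequality \emph{is} the theorem; everything else in your write-up (reduction to nonnegative $f \in C_c(X)$, the Fatou/density extension, absorbing the finite set $K$, comparing the two weights) is bookkeeping around it. The paper closes exactly this hole with two ingredients you do not supply: (i) the discrete Picone inequality (quoted from \cite{Fis22b}), $\cQ(\varphi) \geq \frac{2}{p}\sum_{x} \frac{\cL s(x)}{s(x)^{p-1}}|\varphi(x)|^p m(x)$, applied to $s = h^{1/2}$ (your check that $h \in \cF_p \cap \cF_{p/2+1}$ forces $s \in \cF_p$ is what licenses this), and (ii) the pointwise estimate of Lemma~\ref{lemma:main estimate},
\begin{equation*}
2\,h(x)^{1/2}\,\cL(h^{1/2})(x) \;\geq\; |\nabla h^{1/2}|^p(x) + \frac{1}{2^{p-2}h(x)^{(p-2)/2}}\,\cL h(x),
\end{equation*}
which is proved from the algebraic identity of Lemma~\ref{lemma:comp} by a mean-value-theorem and sign-splitting argument. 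Chaining (i), (ii) and the supersolution bound (made global by adjusting $\lambda$ on the finite set $K$, essentially as you also suggest) yields Proposition~\ref{proposition:hardy}, i.e., the inequality on $C_c(X)$ with explicit constants. Your $p=2$ identity $s\,\cL s = \tfrac12|\nabla s|^2 + \tfrac12 \cL h$ is precisely Lemma~\ref{lemma:comp} specialized to $p=2$, so you located the right object; but for general $p$ neither Picone's inequality nor its combination with (ii) is a routine verification, and without them the statement remains unproven.

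The parts you did carry out are correct, and one is arguably cleaner than the paper. Your density step (norm convergence implies pointwise convergence, then Fatou) is the same as the paper's lower-semicontinuity argument, and your observation that edgewise domination of the weight by $\cL s/s^{p-1}$ fails is a fair diagnosis of the difficulty. Your deduction of the second inequality from the first is a nice exact pointwise argument: the factorization $h(x)-h(y) = (s(x)-s(y))(s(x)+s(y))$ gives
\begin{equation*}
\frac{|\nabla h|^p(x)}{h(x)^{p}} \;\leq\; \Bigl(1 + \sup_{y \sim x}\bigl(h(y)/h(x)\bigr)^{1/2}\Bigr)^{p}\,\frac{|\nabla h^{1/2}|^p(x)}{h(x)^{p/2}},
\end{equation*}
where the paper instead invokes the mean-value-theorem bound of Lemma~\ref{lemma:pchain rule}; both routes need the edge-ratio hypothesis, but yours avoids an extra lemma. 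This correct periphery, however, does not compensate for the missing center: as written, the proposal identifies where the proof must happen without carrying it out.
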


\begin{remark}
\begin{enumerate}[(a)]
 \item  As mentioned in the introduction Hardy inequalities on discrete spaces have recently gained quite some attention with a focus on optimality of the weights, see \cite{KPP18a} for the case $p = 2$ and \cite{Fis22c} for general $1 < p < \infty$. Here we do not focus on optimality of constants/weights but  simply on the inclusion of Sobolev spaces.  In Proposition~\ref{proposition:hardy} we provide explicit constants under the somewhat stronger assumption  $\cL h \geq \lambda h^{p-1}$ on $X$. In the case $p = 2$ our Hardy inequality with weight $|\nabla h^{1/2}|^2/h$ is implicitly contained in \cite{KPP18a} but it seems that for $p \neq 2$ it is not (implicitly) contained in the literature. 
 \item The condition $h \in \mathcal F_p \cap \mathcal F_{p/2 + 1}$ is equivalent to $h \in  \mathcal F_p$ if $p \geq 2$ and $h \in \mathcal F_{p/2 + 1}$ if $p < 2$, see Lemma~\ref{lemma:formal domain} below. As discussed above, if $h$ is bounded, then it satisfies these conditions. 
\end{enumerate}
\end{remark}

Combining this theorem and Corollary~\ref{corollary:corollary  1} we obtain the following  characterizations of $H^{1,p}_0(X,m)$, which can be seen as discrete analogues of Harris' criterion and its converse.

\begin{corollary}\label{corollary:corollary  2}
    Let $1 < p < \infty$ and let $\sigma$ be a $p$-intrinsic pseudo metric with the following properties: 
 \begin{itemize} 
  \item $\partial_\sigma X$ is closed.
  \item For all $ \varepsilon > 0$ the function $\deg/m$ is bounded on bounded subsets of $$\{x \in X \mid \ds(x) \geq \varepsilon\}.$$ 
  \item There exists a finite $K \subseteq X$ and $\lambda \in \R$ such that  $\cL_p  \ds \geq \lambda \ds^{p-1}$ on $X \setminus K$.
  %\item 
  %
  %$$\sup_{x,y \in X, x \sim y} \frac{\ds(x)}{\ds(y)} < \infty. $$
  %
  \end{itemize}
 Then
  $$H^{1,p}_0(X,m) = \{f \in H^{1,p}(X,m) \mid \frac{|\nabla \ds^{1/2} |}{\ds^{1/2}}  f \in \ell^p(X,m)\}.$$
\end{corollary}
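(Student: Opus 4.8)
The plan is to obtain the asserted equality by establishing the two inclusions separately, each being a direct application of one of the two main results of the paper with the same weight function. The crucial observation is that the exponent $\alpha = 1/2$ is exactly what links them: Harris' criterion in Corollary~\ref{corollary:corollary  1} is available for \emph{every} $\alpha > 0$, whereas the Hardy inequality in Theorem~\ref{thm:boundary hardy} produces precisely the weight $|\nabla h^{1/2}|/h^{1/2}$. Choosing $\alpha = 1/2$ therefore makes the two weights coincide, and the equality of sets becomes the combination of an upper and a lower estimate for the same quantity.

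For the inclusion ``$\supseteq$'' I would apply Corollary~\ref{corollary:corollary  1} to $\sigma$ with $\alpha = 1/2$. The first two hypotheses of Corollary~\ref{corollary:corollary  2} -- that $\partial_\sigma X$ is closed and that $\deg/m$ is bounded on bounded subsets of each set $\{x \in X \mid \ds(x) \geq \varepsilon\}$ -- are literally the two hypotheses demanded by Corollary~\ref{corollary:corollary  1}. Hence that corollary yields $\{f \in H^{1,p}(X,m) \mid \frac{|\nabla \ds^{1/2}|}{\ds^{1/2}} f \in \ell^p(X,m)\} \subseteq H^{1,p}_0(X,m)$, which is one of the two desired inclusions.

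For the inclusion ``$\subseteq$'' I would apply Theorem~\ref{thm:boundary hardy} with $h = \ds$. Two hypotheses must be verified. First, $\ds \in \mathcal F_p \cap \mathcal F_{p/2 + 1}$; this holds because $\ds$ is bounded by the cut-off $D$, and $\ell^\infty(X) \subseteq \mathcal F_q$ for every $q$ by the summability condition on $b$ (as already noted after the definition of $\mathcal F_p$). Second, the superharmonicity $\cL_p \ds \geq \lambda \ds^{p-1}$ on $X \setminus K$ for some finite $K$ and some $\lambda \in \R$ is precisely the third hypothesis of Corollary~\ref{corollary:corollary  2}. The theorem then delivers $H^{1,p}_0(X,m) \subseteq \{f \in H^{1,p}(X,m) \mid \frac{|\nabla \ds^{1/2}|}{\ds^{1/2}} f \in \ell^p(X,m)\}$, the second desired inclusion. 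Combining the two gives the claimed equality.

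I do not expect a genuine obstacle here: the corollary is a bookkeeping combination of the two main theorems, and the only step warranting a word of justification is that boundedness of $\ds$ places it in the formal domains $\mathcal F_p$ and $\mathcal F_{p/2 + 1}$, so that the Hardy inequality is indeed applicable to $h = \ds$. The conceptual content lies entirely in the matching of the weight exponent $1/2$, which is what allows the upper bound furnished by the Hardy inequality and the membership criterion furnished by Harris' criterion to describe the same subspace of $H^{1,p}(X,m)$.
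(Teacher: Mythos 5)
Your proof is correct and is precisely the argument the paper intends: the corollary is stated as an immediate combination of Theorem~\ref{thm:boundary hardy} applied with $h = \ds$ (giving ``$\subseteq$'') and Corollary~\ref{corollary:corollary  1} applied with $\alpha = 1/2$ (giving ``$\supseteq$''), which is exactly your two-inclusion argument, including the observation that boundedness of $\ds$ puts it in $\mathcal F_p \cap \mathcal F_{p/2+1}$. The only detail worth adding is that the closedness of $\partial_\sigma X$ also guarantees $\ds > 0$ on $X$, which is needed for $\ds$ to be an admissible $h$ in Theorem~\ref{thm:boundary hardy}; the paper records this fact just before Corollary~\ref{corollary:corollary  1}.
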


\begin{corollary}\label{corollary:corollary  3}
Assume the situation of Corollary~\ref{corollary:corollary  2} and assume further:
\begin{itemize}
\item  
  $$\sup_{x,y \in X, x \sim y} \frac{\ds(x)}{\ds(y)} < \infty. $$
  \item $$\inf_{x \in X} |\nabla \ds|(x) > 0.$$
\end{itemize}
Then 
$$H^{1,p}_0(X,m) = \{f \in H^{1,p}(X,m) \mid    f/\ds \in \ell^p(X,m)\}.$$
\end{corollary}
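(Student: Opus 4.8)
The plan is to deduce the statement directly from Corollary~\ref{corollary:corollary  2}, whose characterization carries the weight $|\nabla \ds^{1/2}|^p/\ds^{p/2}$ rather than the desired $1/\ds^p$. Since the two new hypotheses do not alter the Sobolev spaces themselves, it suffices to show that for every $f \in H^{1,p}(X,m)$ the two summability conditions
$$\frac{|\nabla \ds^{1/2}|}{\ds^{1/2}} f \in \ell^p(X,m) \quad \text{and} \quad \frac{f}{\ds} \in \ell^p(X,m)$$
are equivalent. Comparing the integrands pointwise, and using the factorization $|\nabla \ds^{1/2}|^p(x)/\ds^{p/2}(x) = \big(|\nabla \ds^{1/2}|(x)\,\ds^{1/2}(x)\big)^p \cdot \ds^{-p}(x)$, this reduces to proving that the quantity $|\nabla \ds^{1/2}|(x)\,\ds^{1/2}(x)$ is bounded above and below by strictly positive constants, \emph{uniformly} in $x \in X$.

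First I would pass from the gradient of $\ds^{1/2}$ to that of $\ds$ via the elementary identity $\ds^{1/2}(x) - \ds^{1/2}(y) = (\ds(x) - \ds(y))/(\ds^{1/2}(x) + \ds^{1/2}(y))$. Only neighbors $y$ with $x \sim y$ contribute to $|\nabla \ds^{1/2}|(x)$, and for these the first new hypothesis $M := \sup_{x \sim y} \ds(x)/\ds(y) < \infty$ gives, using symmetry of $\sim$, the two-sided bound $\ds^{1/2}(x) \le \ds^{1/2}(x) + \ds^{1/2}(y) \le (1 + M^{1/2})\,\ds^{1/2}(x)$. Inserting this into the identity, raising to the $p$-th power and summing against $b(x,y)/m(x)$ yields
$$\frac{1}{(1+M^{1/2})^p}\,\frac{|\nabla \ds|^p(x)}{\ds^{p/2}(x)} \le |\nabla \ds^{1/2}|^p(x) \le \frac{|\nabla \ds|^p(x)}{\ds^{p/2}(x)},$$
with constants independent of $x$; equivalently $|\nabla \ds^{1/2}|(x)\,\ds^{1/2}(x)$ is comparable to $|\nabla \ds|(x)$ uniformly in $x$.

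It then remains to control $|\nabla \ds|(x)$ itself. The upper bound $|\nabla \ds|(x) \le 1$ is free, since $\ds$ is $1$-Lipschitz with respect to the $p$-intrinsic pseudo metric $\sigma$ and hence satisfies the Rademacher-type estimate recalled in the set-up. The lower bound is precisely the second new hypothesis $\inf_{x} |\nabla \ds|(x) > 0$. Combining these with the previous display produces constants $0 < c \le C < \infty$ with $c \le |\nabla \ds^{1/2}|(x)\,\ds^{1/2}(x) \le C$ for all $x$, whence $c^p\,\ds^{-p}(x) \le |\nabla \ds^{1/2}|^p(x)/\ds^{p/2}(x) \le C^p\,\ds^{-p}(x)$. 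The two weights are therefore comparable, the two summability conditions coincide on $H^{1,p}(X,m)$, and the claimed identity follows from Corollary~\ref{corollary:corollary  2}.

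I expect the only real (and rather modest) obstacle to be the uniformity of all comparison constants in $x$. The replacement of the $\ds^{1/2}$-gradient by the $\ds$-gradient is harmless precisely because the edge-ratio bound $M$ controls $\ds^{1/2}(x) + \ds^{1/2}(y)$ by $\ds^{1/2}(x)$ simultaneously across \emph{every} edge, and the global bound $|\nabla \ds| \le 1$ rests on the intrinsic property of $\sigma$ rather than on any purely local estimate. Once this uniformity is in place, the remaining steps are the pointwise algebraic manipulations indicated above.
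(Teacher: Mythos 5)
Your argument is correct, but it is organized differently from the paper's. The paper obtains Corollary~\ref{corollary:corollary  3} directly from Theorem~\ref{thm:boundary hardy} and Corollary~\ref{corollary:corollary  1}, without passing through Corollary~\ref{corollary:corollary  2}: the inclusion $H^{1,p}_0(X,m) \subseteq \{f \in H^{1,p}(X,m) \mid f/\ds \in \ell^p(X,m)\}$ follows from the \emph{second} Hardy inequality of Theorem~\ref{thm:boundary hardy} applied to $h = \ds$ (this is where the edge-ratio hypothesis enters) combined with $\inf_x |\nabla \ds|(x) > 0$, while the reverse inclusion is the ``In particular'' part of Corollary~\ref{corollary:corollary  1}, which needs only the Rademacher bound $|\nabla \ds| \leq 1$. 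You instead take the characterization of Corollary~\ref{corollary:corollary  2} as a black box and upgrade it by showing that the weights $|\nabla \ds^{1/2}|^p/\ds^{p/2}$ and $\ds^{-p}$ are uniformly pointwise comparable. Your two-sided estimate, based on the identity $\ds^{1/2}(x)-\ds^{1/2}(y) = (\ds(x)-\ds(y))/(\ds^{1/2}(x)+\ds^{1/2}(y))$, the edge-ratio bound, the upper bound $|\nabla \ds|\leq 1$ and the assumed lower bound on $|\nabla \ds|$, is sound (note that $\ds > 0$ everywhere since $\partial_\sigma X$ is closed, so all quotients are legitimate); its lower-bound half is essentially the paper's Lemma~\ref{lemma:pchain rule} as exploited in Proposition~\ref{proposition:hardy}, so the technical core overlaps. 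What your route buys: it isolates exactly what the two extra hypotheses accomplish (they make the two weights comparable), and it delivers uniform comparability of the weights, hence equivalence of the two weighted $\ell^p$-conditions on all of $C(X)$, which is slightly more than the set identity. What the paper's route buys: it produces an explicit quantitative Hardy inequality with weight $\ds^{-p}$ valid for all $f \in H^{1,p}_0(X,m)$, not merely the identification of the two spaces, and it cleanly separates which hypothesis drives which inclusion.
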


\begin{remark}
 As already remarked above, the reason why in Corollary~\ref{corollary:corollary  2} we have to consider $|\nabla \ds^{1/2} | /\ds^{1/2}$ instead of $|\nabla \ds|/\ds$ is the lack of a chain rule. Instead of using a chain rule we can estimate $|\nabla \ds^{1/2}|$ from below with the help of the mean value theorem but then we have to assume $\sup\{h(x)/h(y) \mid x,y \in X,\, x \sim y\} < \infty$, see Lemma~\ref{lemma:pchain rule}. Moreover, in general we cannot expect $|\nabla \ds|$ to be bounded from below and so it appears as an assumption in Corollary~\ref{corollary:corollary  3}.
\end{remark}

% 
% \begin{remark}
%  Corollary~\ref{corollary:corollary  3} is the direct discrete analogue of Harris' criterion and its converse. 
% \end{remark}

% \begin{remark}
% \begin{enumerate}[(a)]
%  \item In the classical case of domains in $\R^n$ the condition 
% \end{enumerate}
%  
%  
% \end{remark}

\section{Some properties of discrete Sobolev spaces}

In this section we establish some fundamental results for the discrete Sobolev spaces and energy functionals. The results in this section are all well known in the case $p = 2$ and we refer to \cite[Section~1]{KLW21} for most of them.

\begin{lemma}[Contraction properties]\label{lemma:contraction properties}
Let $1 \leq p < \infty$.  Let $C \colon \R \to \R$ be $L$-Lipschitz. Then 
 $$\mathcal Q(C \circ f) \leq L^p \mathcal Q(f)$$
 for $f \in C(X)$.  In particular, $\mathcal Q(|f|) \leq \mathcal Q(f)$ and 
 $$\mathcal Q(f \wedge g)^{1/p} \leq \mathcal Q(f)^{1/p} + \mathcal Q(g)^{1/p} $$
 for all $f,g \in C(X)$. 
\end{lemma}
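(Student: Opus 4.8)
The plan is to prove the three assertions in sequence, starting from the general contraction statement for Lipschitz maps and then deriving the two special cases.

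For the main claim $\mathcal Q(C \circ f) \leq L^p \mathcal Q(f)$, I would argue directly from the definition of $\mathcal Q$. Since $C$ is $L$-Lipschitz, for any two points $x,y \in X$ we have $|C(f(x)) - C(f(y))| \leq L|f(x) - f(y)|$, and raising to the $p$-th power gives $|C(f(x)) - C(f(y))|^p \leq L^p |f(x) - f(y)|^p$. Multiplying by the nonnegative weight $b(x,y)$ and summing over all $x,y \in X$ yields the inequality, since the weights preserve the pointwise bound termwise. Because everything is nonnegative, there is no issue with convergence: if $\mathcal Q(f) = \infty$ the bound is trivial, and otherwise the dominated sum is finite. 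This is the conceptual heart of the lemma and it is essentially immediate once one writes out the sums.

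For $\mathcal Q(|f|) \leq \mathcal Q(f)$, I would simply apply the main statement to the map $C(t) = |t|$, which is $1$-Lipschitz, so $L = 1$. For the subadditivity-type estimate involving $f \wedge g$, the idea is to write the minimum as a composition that reduces the problem to the already-established contraction bound. First I would use the identity $f \wedge g = \tfrac{1}{2}(f + g) - \tfrac{1}{2}|f - g|$. By the triangle inequality for the seminorm $\mathcal Q(\cdot)^{1/p}$ — which itself follows because $\mathcal Q(\cdot)^{1/p}$ is, up to the constant $(1/p)^{1/p}$, an $\ell^p$-type seminorm on the edge-difference space, hence satisfies Minkowski's inequality — I would bound $\mathcal Q(f \wedge g)^{1/p}$ by a combination of $\mathcal Q(f+g)^{1/p}$ and $\mathcal Q(|f-g|)^{1/p}$, then invoke the contraction property (with the $1$-Lipschitz map $|\cdot|$) to drop the absolute value and further split $f + g$ and $f - g$ into their constituents.

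The step I expect to be the main obstacle is verifying the triangle inequality for $\mathcal Q(\cdot)^{1/p}$ cleanly, since it requires recognizing $\mathcal Q(\cdot)^{1/p}$ as an honest seminorm and applying Minkowski in the measure space over $X \times X$ with weight $b$. An alternative, possibly more transparent route to the $f \wedge g$ estimate is to bound $|(f\wedge g)(x) - (f \wedge g)(y)|$ pointwise by $|f(x) - f(y)| \vee |g(x) - g(y)| \leq |f(x)-f(y)| + |g(x)-g(y)|$, using that $t \mapsto t \wedge s$ is $1$-Lipschitz, and then apply Minkowski's inequality in the weighted $\ell^p$ space indexed by pairs $(x,y)$ to the two difference functions. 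I would likely present this second approach, as it keeps the combinatorial estimate and the functional-analytic estimate separate and avoids manipulating the explicit formula for the minimum.
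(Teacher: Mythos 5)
Your proposal is correct, and for the first two assertions it coincides with the paper's proof: the contraction bound is obtained termwise from the pointwise Lipschitz estimate, and $\mathcal Q(|f|) \leq \mathcal Q(f)$ follows by taking $C = |\cdot|$, $L = 1$. For the $f \wedge g$ estimate you describe two routes; the first (the identity $f \wedge g = (f+g-|f-g|)/2$ combined with the seminorm property of $\mathcal Q^{1/p}$ and the absolute-value result) is exactly the paper's argument, while the one you say you would actually present is genuinely different: you bound $|(f\wedge g)(x) - (f\wedge g)(y)|$ pointwise by $|f(x)-f(y)| + |g(x)-g(y)|$ and then apply Minkowski's inequality in the weighted space $\ell^p(X\times X, b)$. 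Both are sound and both ultimately rest on the same Minkowski inequality — the paper's ``$\mathcal Q^{1/p}$ is a seminorm on $\mathcal D$'' is nothing but Minkowski on the edge space, as you note. Your preferred route buys a self-contained argument that needs neither the algebraic identity nor the absolute-value step, at the cost of verifying the pointwise lattice estimate; the paper's route is shorter once the seminorm property is available (which the paper reuses anyway, e.g.\ in the uniform convexity argument). Two minor points to tighten: your justification ``$t \mapsto t \wedge s$ is $1$-Lipschitz'' only controls one argument at a time, so either invoke that $\wedge \colon \R^2 \to \R$ is $1$-Lipschitz for the $\ell^1$-norm, or insert the two-step bound $|\min(a,b)-\min(c,d)| \leq |\min(a,b)-\min(c,b)| + |\min(c,b)-\min(c,d)| \leq |a-c| + |b-d|$; and, as the paper does, one should remark that the claimed inequality is trivial unless $\mathcal Q(f), \mathcal Q(g) < \infty$, so that the triangle/Minkowski inequality is applied to finite quantities.
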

\begin{proof}
 The first inequality follows immediately from the definition of $\mathcal Q$.  The result on $|f|$ is a consequence of the absolute value being $1$-Lipschitz. For the last statement it suffices to consider $f,g \in \mathcal D$. Since $\mathcal Q^{1/p}$ is a seminorm on $\mathcal D$, it follows directly  from $f \wedge g = (f + g - |f-g|)/2$ and the result on the absolute value. 
\end{proof}

\begin{lemma}[Pointwise lower semicontinuity of $\mathcal Q$]
 Let $1 \leq p < \infty$. The functional $\mathcal Q$ is lower semicontinuous with respect to pointwise convergence, i.e., for all $(f_n)$, $f$ in $C(X)$ the convergence $f_n \to f$ pointwise, as $n \to \infty$, implies
 $$\mathcal Q(f) \leq  \liminf_{n \to \infty} \mathcal Q(f_n).$$
\end{lemma}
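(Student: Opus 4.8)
The goal is to prove that $\mathcal{Q}$ is lower semicontinuous with respect to pointwise convergence. The plan is to exploit the fact that $\mathcal{Q}$ is built as a sum of nonnegative terms, each of which depends continuously on only finitely many values of the function, so that Fatou's lemma applies to the summation.

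First I would recall the definition
$$\mathcal{Q}(f) = \frac{1}{p}\sum_{x,y \in X} b(x,y)|f(x) - f(y)|^p,$$
and observe that this is a sum over the countable index set $X \times X$ of the nonnegative quantities $a_{x,y}(f) := \frac{1}{p} b(x,y)|f(x) - f(y)|^p$. The key observation is that for each fixed pair $(x,y)$, pointwise convergence $f_n \to f$ gives $f_n(x) \to f(x)$ and $f_n(y) \to f(y)$, and since $t \mapsto |t|^p$ is continuous, we get $a_{x,y}(f_n) \to a_{x,y}(f)$ as $n \to \infty$. In particular $a_{x,y}(f) = \liminf_{n} a_{x,y}(f_n)$ for every pair.

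The main step is then to apply Fatou's lemma to the counting measure on $X \times X$. Writing $g_n$ for the function $(x,y) \mapsto a_{x,y}(f_n)$, which is nonnegative, Fatou's lemma yields
$$\mathcal{Q}(f) = \sum_{x,y} \liminf_{n \to \infty} a_{x,y}(f_n) \leq \liminf_{n \to \infty} \sum_{x,y} a_{x,y}(f_n) = \liminf_{n \to \infty} \mathcal{Q}(f_n),$$
which is exactly the claimed inequality. The first equality uses the pointwise convergence established above together with the fact that the (term-by-term) limit equals the liminf when the limit exists.

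I do not expect any serious obstacle here; the argument is essentially a direct invocation of Fatou's lemma, with the only thing to verify being that each summand converges pointwise in $n$, which is immediate from the continuity of $|\cdot|^p$. The mild subtlety worth stating explicitly is that everything remains valid even when $\mathcal{Q}(f_n) = \infty$ for some or all $n$ (the inequality is then either trivial or still governed correctly by Fatou in $[0,\infty]$), so no finiteness hypothesis on the energies is needed. One could alternatively give a more hands-on proof by truncating to finite subsets $F \subseteq X \times X$, noting that $f \mapsto \sum_{(x,y) \in F} a_{x,y}(f)$ is continuous under pointwise convergence hence a pointwise limit, bounding it by $\liminf_n \mathcal{Q}(f_n)$, and taking the supremum over all finite $F$; but invoking Fatou directly is cleaner.
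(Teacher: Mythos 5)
Your proof is correct and is exactly the argument the paper intends: the paper's proof consists of the single line that the lemma ``is a direct consequence of Fatou's lemma,'' and your write-up simply fills in the (routine) details of applying Fatou to the counting measure on $X \times X$, using pointwise convergence of each summand via continuity of $|\cdot|^p$. Nothing further is needed.
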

\begin{proof}
 This is a direct consequence of Fatou's lemma. 
\end{proof}

\begin{proposition} \label{prop:uniform convexity}
 For $1 < p < \infty$ the Banach spaces $H^{1,p}_0(X,m)$ and $H^{1,p}(X,m)$ are uniformly convex and hence reflexive.
\end{proposition}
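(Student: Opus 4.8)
The plan is to realize both Sobolev spaces as isometric subspaces of a finite $\ell^p$-direct sum of $L^p$-spaces — which are uniformly convex by Clarkson's inequalities — and then to invoke the Milman–Pettis theorem.

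First I would introduce the measure space $(X \times X, \mu)$ with $\mu(\{(x,y)\}) = \frac{1}{p} b(x,y)$ and consider the linear map
$$ J \colon H^{1,p}(X,m) \to \ell^p(X,m) \oplus L^p(X\times X,\mu), \qquad Jf = (f, Df), $$
where $Df(x,y) = f(x) - f(y)$, and where the target carries the $\ell^p$-sum norm $\aV{(g,k)}^p = \aV{g}_p^p + \aV{k}_{L^p(\mu)}^p$. Since $\cD \subseteq \{f \in C(X) \mid Df \in L^p(\mu)\}$, the map $J$ is well defined on $H^{1,p}(X,m)$, and by the very definition of the norm,
$$ \aV{Jf}^p = \aV{f}_p^p + \tfrac{1}{p}\sum_{x,y \in X} b(x,y)|f(x)-f(y)|^p = \aV{f}_p^p + \cQ(f) = \aV{f}_{H^{1,p}(X,m)}^p, $$
so $J$ is an isometry onto its image.

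Next I would recall two standard facts. For $1 < p < \infty$ every $L^p$-space is uniformly convex (Clarkson), and a finite $\ell^p$-direct sum of uniformly convex Banach spaces is again uniformly convex. Hence the target space of $J$ is uniformly convex. Since uniform convexity is a property of the norm alone and therefore passes to arbitrary subspaces, the isometric embedding $J$ shows that $H^{1,p}(X,m)$ is uniformly convex. As $H^{1,p}_0(X,m)$ is by definition a closed subspace of $H^{1,p}(X,m)$, it inherits uniform convexity as well. Finally, both spaces are complete — $H^{1,p}(X,m)$ is a Banach space and $H^{1,p}_0(X,m)$ is closed in it — so the Milman–Pettis theorem yields reflexivity of each.

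I do not expect a genuine obstacle here. The only point requiring a little care is the assertion that the $\ell^p$-sum of finitely many uniformly convex spaces is uniformly convex; this can be handled by taking as modulus a quantity built from the moduli of the two summands together with the (elementary) uniform convexity of $\R^2$ equipped with its $p$-norm. Everything else is bookkeeping around the isometric embedding $J$.
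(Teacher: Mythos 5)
Your proof is correct and takes essentially the same route as the paper: an isometric embedding $f \mapsto (f, Df)$ of $H^{1,p}(X,m)$ into a uniformly convex $L^p$-type space, followed by the observation that uniform convexity passes to subspaces and the Milman--Pettis theorem. The only cosmetic difference is that the paper embeds into a single space $\ell^p(X \cup E,\mu)$ over the disjoint union of vertices and edges, which renders your auxiliary lemma on finite $\ell^p$-direct sums unnecessary, since $\ell^p(X,m) \oplus_p L^p(X\times X,\mu)$ is canonically such a space.
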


\begin{proof}
Uniformly convex spaces are reflexive and so it suffices to prove uniform convexity. Let $E = \{(x,y) \in X \times X \mid b(x,y) > 0\}$ and let
$$\mu \colon X \cup E \to [0,\infty), \quad \mu(z) = \begin{cases}
                                                   \mu(x)&\text{if }z \in X  \\
                                                   b(z)/2p &\text{if }z \in E
                                                  \end{cases}.$$
It is well known that $\ell^p(X \cup E, \mu)$ is uniformly convex. With this at hand the uniform convexity of $H_0^{1,p}(X,m)$ and $H^{1,p}(X,m)$ follow from 
$$\iota \colon H^{1,p}(X,m) \to \ell^p(X \cup E,\mu),\quad \iota(f)(z) = \begin{cases}
                                                                          f(z) &\text{if } z \in X\\
                                                                          f(x) - f(y) &\text{if } z = (x,y) \in E
                                                                         \end{cases}
$$
being an isometry. 
\end{proof}
Next we give an alternative description of $H_0^{1,p}(X,m)$. We write $\mathcal D_0$ for the space of functions $f \in \mathcal D$ for which there exists a sequence $(\varphi_n)$ in $C_c(X)$ with $\mathcal Q(f - \varphi_n) \to 0$ and $\varphi_n \to f$ pointwise, as $n \to \infty$. 
\begin{proposition}\label{prop:H0 equals D0 cap ellp} 
  Let $1 < p < \infty$. Then $H^{1,p}_0(X,m) = \mathcal D_0 \cap \ell^p(X,m)$. 
\end{proposition}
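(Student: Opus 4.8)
The plan is to prove the two inclusions separately, the inclusion $H^{1,p}_0(X,m) \subseteq \mathcal{D}_0 \cap \ell^p(X,m)$ being immediate and the reverse inclusion carrying the bulk of the work. For the easy direction, let $f \in H^{1,p}_0(X,m)$ and pick $\varphi_n \in C_c(X)$ with $\|f - \varphi_n\|_{H^{1,p}(X,m)} \to 0$. By the definition of the norm this gives simultaneously $\mathcal{Q}(f - \varphi_n) \to 0$ and $\|f - \varphi_n\|_p \to 0$; since on a discrete space $\ell^p$-convergence forces pointwise convergence, $\varphi_n \to f$ pointwise. As $f \in H^{1,p}(X,m) = \mathcal{D} \cap \ell^p(X,m)$ lies in $\mathcal{D}$, the sequence $(\varphi_n)$ witnesses $f \in \mathcal{D}_0$, whence $f \in \mathcal{D}_0 \cap \ell^p(X,m)$.

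For the reverse inclusion, fix $f \in \mathcal{D}_0 \cap \ell^p(X,m)$ together with a witnessing sequence $\varphi_n \in C_c(X)$ satisfying $\mathcal{Q}(f - \varphi_n) \to 0$ and $\varphi_n \to f$ pointwise. The obstruction is that $(\varphi_n)$ need not converge to $f$ in $\ell^p$, so one cannot directly conclude $f \in H^{1,p}_0(X,m)$. To repair this I truncate the approximants at the level of $|f|$, setting
$$\psi_n = (\varphi_n \wedge |f|) \vee (-|f|) \in C(X).$$
Here $\supp \psi_n \subseteq \supp \varphi_n$, so $\psi_n \in C_c(X) \subseteq H^{1,p}_0(X,m)$; moreover $|\psi_n| \leq |f|$, and $\psi_n \to f$ pointwise since the truncation map $t \mapsto (t \wedge |f(x)|) \vee (-|f(x)|)$ is continuous and fixes $f(x)$.

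The domination $|\psi_n| \leq |f|$ yields $\|\psi_n\|_p \leq \|f\|_p$, and the elementary pointwise estimate for the truncation, $|\psi_n(x) - \psi_n(y)| \leq |\varphi_n(x) - \varphi_n(y)| + \big|\,|f|(x) - |f|(y)\,\big|$, combined with Minkowski's inequality, the bound $\mathcal{Q}(\varphi_n)^{1/p} \leq \mathcal{Q}(f)^{1/p} + \mathcal{Q}(f-\varphi_n)^{1/p}$, and $\mathcal{Q}(|f|) \leq \mathcal{Q}(f)$ from Lemma~\ref{lemma:contraction properties}, shows that $\mathcal{Q}(\psi_n)$ stays bounded. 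Hence $(\psi_n)$ is a bounded sequence in $H^{1,p}(X,m)$.

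To finish I invoke reflexivity rather than attempting to prove norm convergence directly. By Proposition~\ref{prop:uniform convexity} the space $H^{1,p}(X,m)$ is reflexive, so some subsequence of $(\psi_n)$ converges weakly to a limit $g \in H^{1,p}(X,m)$. Since $H^{1,p}_0(X,m)$ is a norm-closed linear subspace it is weakly closed, whence $g \in H^{1,p}_0(X,m)$. Finally, each point evaluation $h \mapsto h(x)$ is a bounded linear functional on $H^{1,p}(X,m)$ because $|h(x)|^p m(x) \leq \|h\|_{H^{1,p}(X,m)}^p$, so weak convergence forces $\psi_{n_k}(x) \to g(x)$ for every $x$; comparing with $\psi_n \to f$ pointwise gives $g = f$, and thus $f \in H^{1,p}_0(X,m)$. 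The main obstacle is exactly this hard inclusion: upgrading energy-plus-pointwise approximation to genuine $H^{1,p}$-approximation when only $f \in \ell^p$ is known. The truncation against $|f|$ resolves it by simultaneously securing $\ell^p$-domination and an energy bound, after which reflexivity and the weak closedness of $H^{1,p}_0(X,m)$ do the rest.
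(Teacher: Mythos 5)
Your proof is correct, and its core is the same as the paper's: the easy inclusion via $\ell^p$-convergence forcing pointwise convergence, the truncation $\psi_n = (\varphi_n \wedge |f|) \vee (-|f|)$, the resulting $\ell^p$-domination and energy bound, and the appeal to reflexivity from Proposition~\ref{prop:uniform convexity}. Where you genuinely diverge is the endgame. The paper first upgrades $\psi_n \to f$ pointwise to $\psi_n \to f$ in $\ell^p(X,m)$ by dominated convergence, extracts a weak limit $g$ of $(\psi_n)$ in $H^{1,p}_0(X,m)$, and then identifies $g = f$ via a Mazur-type argument: finite convex combinations of the $\psi_n$ converge to $g$ strongly in $H^{1,p}_0(X,m)$ and to $f$ in $\ell^p(X,m)$. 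You skip the $\ell^p$-convergence entirely: you place the weak limit in $H^{1,p}_0(X,m)$ using that a norm-closed subspace is weakly closed, and you identify $g = f$ by noting that each point evaluation $h \mapsto h(x)$ is a bounded linear functional on $H^{1,p}(X,m)$ (since $|h(x)|^p m(x) \leq \aV{h}_{H^{1,p}(X,m)}^p$), so weak convergence implies pointwise convergence. This is leaner, avoiding both dominated convergence and Mazur's lemma; on the other hand, the paper's convex-combination device is not incidental, as essentially the same argument is re-used in the proof of Proposition~\ref{prop:lower semicontinuity}, where one works only with the energy seminorm and point evaluations are no longer bounded functionals, so your shortcut would not apply there. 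Two small technical points in your write-up are fine but worth recording: the inclusion $\supp \psi_n \subseteq \supp \varphi_n$ holds because $(0 \wedge |f(x)|) \vee (-|f(x)|) = 0$, and your energy bound rests on the two-parameter truncation estimate $|(a_1 \wedge b_1)\vee(-b_1) - (a_2 \wedge b_2)\vee(-b_2)| \leq |a_1 - a_2| + |b_1 - b_2|$ for $b_1, b_2 \geq 0$, which is a correct substitute for the paper's direct use of the lattice contraction in Lemma~\ref{lemma:contraction properties}.
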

\begin{proof}
 In our discrete setting convergence in $\ell^p(X,m)$ implies pointwise convergence. Hence, we have  $H^{1,p}_0(X,m) \subseteq \mathcal D_0 \cap \ell^p(X,m)$.
 
 $H^{1,p}_0(X,m) \supseteq \mathcal D_0 \cap \ell^p(X,m)$: Let $f \in D^p_0 \cap \ell^p(X,m)$.  Choose a sequence $(\varphi_n)$ in $C_c(X)$ with $\varphi_n \to f$ pointwise and $\mathcal Q(f- \varphi_n) \to 0$, as $n \to \infty$.  Consider the functions 
 $$\psi_n =  (\varphi_n \wedge |f|) \vee (-|f|).$$
 Then $\psi_n \in C_c(X) \subseteq H^{1,p}_0(X,m)$, $|\psi_n| \leq |f|$ and $\psi_n \to f$ pointwise. Lebesgue's dominated convergence theorem implies $\psi_n \to f$ in $\ell^p(X,m)$. Using the contraction property Lemma~\ref{lemma:contraction properties} yields 
 $$\mathcal Q(\psi_n)^{1/p} \leq \mathcal Q(\varphi_n)^{1/p} + 2 \mathcal Q(f)^{1/p}.$$
 This shows that $(\psi_n)$ is a bounded sequence in the Banach space $H^{1,p}_0(X,m)$. Using weak sequential compactness of bounded sets in reflexive spaces, we can assume without loss of generality $\psi_n \to g \in H_0^{1,p}(X,m)$ weakly in $H^{1,p}_0(X,m)$ (else consider a subsequence). Since in normed spaces weak and strong closures of convex sets coincide, we infer that for any $N \in \mathbb N$ we have
 $$ g \in \overline{{\rm conv} \{ \psi_n \mid n \geq N \}}^{H^{1,p}_0(X,m)}.  $$
 In particular, there exist finite convex combinations of $(\psi_n)$ of the form 
 $$g_N = \sum_{n = N}^{K_N} \lambda_n^{(N)} \psi_n  $$
 with $g_N \to g$ in $H^{1,p}_0(X,m)$, as $N \to \infty$. Since $\psi_n \to f \in \ell^p(X,m)$, we also have $g_N \to f$ in $\ell^p(X,m)$, showing $f = g \in H^{1,p}_0(X,m)$.  
\end{proof}

\begin{remark}
 For $p = 2$ this is the content of \cite[Theorem~1.19]{KLW21}.  
\end{remark}

Next we prove lower semicontinuity of $\mathcal Q$ when restricted to $\mathcal D_0$. More precisely, we let 
$$\mathcal Q_0 \colon C(X) \to  [0,\infty], \quad \mathcal Q_0(f) = \begin{cases}
                                                                         \mathcal Q(f) &\text{if } f \in \mathcal D_0\\
                                                                         \infty &\text{else}
                                                                        \end{cases}
$$
and show that this functional is lower semicontinuous. This is   more involved than lower semicontinuity of $\mathcal Q$, as it does not follow directly from Fatou's lemma.

\begin{proposition}\label{prop:lower semicontinuity}
 Let $1 < p < \infty$. The functional $\mathcal Q_0$ is lower semicontinuous with respect to pointwise convergence, i.e., for all $(f_n)$, $f$ in $C(X)$ the convergence $f_n \to f$ pointwise, as $n \to \infty$, implies
 $$\mathcal Q_0(f) \leq  \liminf_{n \to \infty} \mathcal Q_0(f_n).$$
 In particular, if $(\mathcal Q_0(f_n))$ is bounded, then $f \in \mathcal D_0$.
\end{proposition}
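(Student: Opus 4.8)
The plan is to reduce immediately to the nontrivial case $L := \liminf_n \mathcal{Q}_0(f_n) < \infty$ and, after passing to a subsequence, to assume $\mathcal{Q}_0(f_n) \to L$ with every $f_n \in \mathcal{D}_0$ and $\sup_n \mathcal{Q}(f_n) < \infty$. The pointwise lower semicontinuity of $\mathcal{Q}$ proved above already gives $\mathcal{Q}(f) \le L$, so the only genuine task is to show $f \in \mathcal{D}_0$; granting this, $\mathcal{Q}_0(f) = \mathcal{Q}(f) \le L$ finishes the proof, and the ``in particular'' statement is the special case in which the $\liminf$ is finite.

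To access weak compactness I would encode the energy through the gradient embedding. Let $E = \{(x,y) \in X \times X \mid b(x,y) > 0\}$, equipped with the measure $\nu(x,y) = b(x,y)/p$. Then $\nabla \colon \mathcal{D} \to \ell^p(E,\nu)$, $\nabla f(x,y) = f(x) - f(y)$, is linear with $\av{\nabla f}_{\ell^p(E,\nu)}^p = \mathcal{Q}(f)$, and $\ell^p(E,\nu)$ is reflexive for $1 < p < \infty$. Since $(\nabla f_n)$ is bounded, a subsequence, which we do not relabel, satisfies $\nabla f_n \rightharpoonup g$ weakly in $\ell^p(E,\nu)$. Because evaluation at a single edge is a bounded linear functional on $\ell^p(E,\nu)$, weak convergence forces $\nabla f_n \to g$ pointwise on $E$; on the other hand $f_n \to f$ pointwise gives $\nabla f_n(x,y) \to f(x) - f(y)$, whence $g = \nabla f$ and in particular $\nabla f \in \ell^p(E,\nu)$.

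The key point, and the main obstacle, is upgrading this gradient convergence to membership in $\mathcal{D}_0$: since $\mathcal{Q}^{1/p}$ annihilates constants, energy convergence of \emph{some} $C_c(X)$-approximation cannot by itself control pointwise values, yet $\mathcal{D}_0$ demands simultaneous energy and pointwise approximation. I resolve this by retaining the pointwise information carried by the original sequence. As weak and strong closures of convex sets coincide, Mazur's lemma provides, for each $j$, a finite convex combination $g_j = \sum_{n \ge j} \lambda_n^{(j)} f_n$ with $\nabla g_j \to \nabla f$ strongly, i.e. $\mathcal{Q}(g_j - f) \to 0$. Each $g_j$ lies in $\mathcal{D}_0$, which is a linear subspace, and, being a convex combination of a tail of a pointwise convergent sequence, also satisfies $g_j \to f$ pointwise.

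It remains to diagonalize back into $C_c(X)$. Enumerating $X = \{x_1, x_2, \dots\}$ and using that each $g_j \in \mathcal{D}_0$, I would choose $\varphi^{(j)} \in C_c(X)$ with $\mathcal{Q}(g_j - \varphi^{(j)}) < 1/j$ and $|\varphi^{(j)}(x_i) - g_j(x_i)| < 1/j$ for all $i \le j$. The triangle inequality for $\mathcal{Q}^{1/p}$ then yields $\mathcal{Q}(f - \varphi^{(j)}) \to 0$, while for fixed $x_i$ and $j \ge i$ one has $|f(x_i) - \varphi^{(j)}(x_i)| \le |f(x_i) - g_j(x_i)| + 1/j \to 0$, so $\varphi^{(j)} \to f$ pointwise. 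Hence $f \in \mathcal{D}_0$, which completes the argument.
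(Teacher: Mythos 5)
Your proof is correct, and it rests on the same functional-analytic core as the paper's — reflexivity of an $\ell^p$-realization of the energy, weak compactness, Mazur's lemma, and the fact that convex combinations drawn from tails preserve pointwise convergence — but it executes the two main steps in the opposite order, which changes the technical details in a genuine way. The paper first replaces each $f_n$ by a compactly supported approximant $\varphi_n$ with $\mathcal{Q}(f_n - \varphi_n) \to 0$ and $\varphi_n \to f$ pointwise (itself a diagonal selection), and only then applies Mazur to $(\varphi_n)$ inside the completion of $\mathcal{D}/\ker \mathcal{Q}$; the resulting convex combinations $\psi_N$ lie in $C_c(X)$ from the start and directly witness $f \in \mathcal{D}_0$, with the limit identification handled implicitly by the pointwise lower semicontinuity of $\mathcal{Q}$ via $\mathcal{Q}(f - \psi_N) \leq \liminf_M \mathcal{Q}(\psi_M - \psi_N)$, so no weak limit ever needs to be named. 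You instead apply Mazur directly to $(f_n)$ through the gradient embedding into $\ell^p(E,\nu)$, identify the weak limit as $\nabla f$ by the clean observation that point evaluations are bounded on $\ell^p(E,\nu)$ (valid since $\nu > 0$ on $E$), obtain $\mathcal{D}_0$-functions $g_j$ with $\mathcal{Q}(g_j - f) \to 0$ and $g_j \to f$ pointwise, and then pay for this with a second diagonal step to descend from $\mathcal{D}_0$ back to $C_c(X)$, controlling energy and finitely many pointwise values simultaneously. The trade-off: the paper's ordering avoids your final diagonalization but hides one at the beginning; your ordering makes the weak-limit identification explicit and transparent, and correctly flags (and resolves) the subtle point that $\mathcal{D}_0$-membership requires pointwise and energy approximation at once, which is exactly why your last step needs both the enumeration of $X$ and the $1/j$-control at finitely many sites. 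Both arguments are complete; yours is somewhat longer but more self-explanatory about where each hypothesis is used.
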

\begin{proof}
 It suffices to consider $(f_n)$ in $\mathcal D_0^p$ with $f_n \to f$ pointwise and 
 $$\liminf_{n \to \infty} \mathcal Q_0(f_n) = \lim_{n \to \infty} \mathcal Q_0(f_n) < \infty.$$
  By the definition of $\mathcal D_0$ we can choose a sequence $(\varphi_n)$ in $C_c(X)$ with $\mathcal Q(f_n - \varphi_n) \to 0$ and $\varphi_n \to f$ pointwise. In particular, the sequence $(\varphi_n)$ is $\mathcal Q$-bounded.

We denote by
$$\av{\cdot}_{\mathcal Q} \colon \mathcal D \to [0,\infty),\quad \av{f}_{\mathcal Q} = \mathcal Q(f)^{1/p} $$
the seminorm induced by the $p$-energy functional.  The space $(\mathcal D,\av{\cdot}_{\mathcal Q})$ is uniformly convex, cf. the proof of Proposition~\ref{prop:uniform convexity}. Hence, the completion of $(\mathcal D / \ker \mathcal Q,\av{\cdot}_{\mathcal Q})$ is reflexive. Using weak compactness of bounded sets in reflexive spaces and that convex sets have the same weak and strong closure, we infer the existence of finite convex combinations of $(\varphi_n)$ of the form 
 $$\psi_N = \sum_{n = N}^{K_N} \lambda^{(N)}_n \varphi_n$$
 such that $(\psi_N)$ is $\av{\cdot}_{\mathcal Q}$-Cauchy, cf. the argument given in the proof of Proposition~\ref{prop:H0 equals D0 cap ellp}. By definition $(\psi_N)$ belongs to $C_c(X)$.  Moreover,  $\varphi_n \to f$ pointwise, as $n \to \infty$, implies $\psi_N \to f$ pointwise, as $N \to \infty$.  The pointwise lower semicontinuity of $\mathcal Q$ yields 
 $$\mathcal Q(f - \psi_N) \leq  \liminf_{M \to \infty} \mathcal Q(\psi_M - \psi_N) \to 0, \text{ as } N \to \infty, $$
 and we obtain $f \in \mathcal D_0$. Since $\mathcal Q$ and $\mathcal Q_0$ agree on $\mathcal D_0$, the lower semicontinuity of $\mathcal Q$ implies 
 \begin{align}
  \mathcal Q_0(f) &= \mathcal Q(f) \leq \liminf_{n \to \infty} \mathcal Q(f_n) = \liminf_{n \to \infty} \mathcal Q_0(f_n). \hfill \qedhere
 \end{align}
\end{proof}

\begin{lemma} \label{lemma:sets of bounde deg}
 Let $1 \leq p < \infty$. Let $K \subseteq X$ such that $\deg/m$ is bounded on $K$. Then there exists a constant $C \geq 0$ such that 
 $$\mathcal Q (f 1_K) \leq C \sum_{x \in K} |f(x)|^p m(x)$$
 for all $f \in C(X)$. In particular, 
 $$\{f \in \ell^p(X,m) \mid \supp f \subseteq K\} \subseteq H^{1,p}_0(X,m).$$
\end{lemma}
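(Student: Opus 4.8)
The plan is to first establish the quantitative energy estimate, and then deduce the inclusion by a truncation argument that reuses this estimate.

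For the inequality I would start from the definition $\mathcal Q(f 1_K) = \frac{1}{p} \sum_{x,y \in X} b(x,y) |(f1_K)(x) - (f1_K)(y)|^p$ and discard the differences using the elementary convexity bound $|a - b|^p \le 2^{p-1}(|a|^p + |b|^p)$, valid for $p \ge 1$. This replaces each difference by $2^{p-1}(|(f1_K)(x)|^p + |(f1_K)(y)|^p)$. Exploiting the symmetry $b(x,y) = b(y,x)$, the two resulting double sums coincide, and carrying out the inner summation over $y$ turns $\sum_{y} b(x,y)$ into $\deg(x)$. Since $(f1_K)(x)$ vanishes off $K$, this yields $\mathcal Q(f1_K) \le \frac{2^p}{p} \sum_{x \in K} |f(x)|^p \deg(x)$. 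Writing $\deg(x) = (\deg(x)/m(x))\, m(x)$ and invoking the hypothesis that $\deg/m \le M$ on $K$ for some finite $M$ gives the claim with $C = 2^p M / p$.

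The structural point worth recording is that $C$ depends on $K$ only through the bound $M$ for $\deg/m$; hence the same $C$ controls $\mathcal Q(g 1_{K'})$ for every subset $K' \subseteq K$, and this uniformity is what makes the approximation below work. For the inclusion, let $f \in \ell^p(X,m)$ with $\supp f \subseteq K$. The first part (applicable since $\sum_{x \in K}|f|^p m \le \av{f}_p^p < \infty$) shows $\mathcal Q(f) = \mathcal Q(f 1_K) < \infty$, so $f \in \mathcal D \cap \ell^p(X,m) = H^{1,p}(X,m)$. To see $f \in H_0^{1,p}(X,m)$ I would fix an exhaustion $K_1 \subseteq K_2 \subseteq \cdots$ of $X$ by finite sets and set $\varphi_n = f 1_{K_n} \in C_c(X)$. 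Since $\supp f \subseteq K$, we have $f - \varphi_n = f 1_{K \setminus K_n}$, so the $\ell^p$-error $\av{f - \varphi_n}_p^p = \sum_{x \in K \setminus K_n} |f(x)|^p m(x)$ is the tail of a convergent series and tends to $0$. For the energy, applying the first part to the subset $K \setminus K_n \subseteq K$ with the uniform constant $C$ gives $\mathcal Q(f - \varphi_n) = \mathcal Q(f 1_{K \setminus K_n}) \le C \sum_{x \in K \setminus K_n} |f(x)|^p m(x) \to 0$. Hence $\varphi_n \to f$ in $H^{1,p}(X,m)$, proving $f \in H_0^{1,p}(X,m)$.

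The only genuine obstacle is the energy convergence of the truncation error: unlike the $\ell^p$-part, $\mathcal Q(f - \varphi_n)$ contains all edge-contributions between $K \setminus K_n$ and the rest of the graph, which cannot be dropped naively. The first inequality is precisely the tool that tames them, and the observation that its constant is uniform over subsets of $K$ is what lets the energy bound collapse to a vanishing tail sum.
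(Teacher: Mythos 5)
Your proof is correct and follows essentially the same route as the paper: bound the energy of $f1_K$ by $\sum_{x\in K}|f(x)|^p\deg(x)$ via the convexity inequality and symmetry of $b$, invoke the bound on $\deg/m$, and then obtain the inclusion by approximating with compactly supported truncations (which is exactly what the paper's appeal to density of $C_c(X)$ in $\ell^p(X,m)$ amounts to, combined with the energy estimate). The only cosmetic difference is that the paper splits the double sum into the $K\times K$ part and the cross terms before estimating, while you apply the convexity bound to the whole sum at once; both yield the same conclusion.
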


\begin{proof}
 Let $f \in C(X)$ with ${\rm supp}f \subseteq K$. Using the symmetry of $b$ and the inequality $(|w| + |z|)^p \leq 2^p(|w|^p + |z|^p)$ we obtain 
 \begin{align*}
  p \mathcal Q(f) &= \sum_{x,y \in X} b(x,y) |f(x) - f(y)|^p \\
  &= \sum_{x,y \in K} b(x,y) |f(x) - f(y)|^p + 2 \sum_{x \in K} |f(x)|^p  \sum_{y \in X \setminus K} b(x,y)\\
  &\leq 2^{p+1} \sum_{x\in K} |f(x)|^p \sum_{y \in K} b(x,y) + 2 \sum_{x \in K} |f(x)|^p  \sum_{y \in X \setminus K} b(x,y)\\
  &\leq  C \sum_{x \in K} |f(x)|^p m(x).
 \end{align*}
 The statement on the discrete Sobolev spaces follows directly from this inequality and the density of $C_c(X)$ in $\ell^p(X,m)$.
\end{proof}

\section{Discrete chain rules and Hardy inequality}

In this section we collect some formulas replacing the chain rule in the discrete setting and use them to establish a Hardy inequality. 

We start with a lemma on the domain of the formal Laplacian that guarantees absolute convergence of the sums involved in the computations below.  

\begin{lemma}\label{lemma:formal domain}
 Let $1 \leq p,q < \infty$. 
 \begin{enumerate}[(a)]
  \item $\mathcal F_p = \{f \in C(X) \mid \sum_{y \in X} b(x,y)|f(y)|^{p-1} < \infty \text{ for all } x \in X\}. $
  \item $f \in C(X)$ satisfies $|\nabla f|(x) < \infty$ for all $x \in X$ if and only if $f \in \mathcal F_{p+1}$.
  \item If $p \leq q$, then $\mathcal F_q \subseteq \mathcal F_p$. 
 \end{enumerate}
  In particular,  we have $\ell^\infty(X) \subseteq \mathcal F_p$ for all $1 \leq p < \infty$ and if $f \in C(X)$ satisfies 
 $$\sup_{x,y \in X, x \sim y} \frac{|f(x)|}{|f(y)|} < \infty,$$
 then $f \in \mathcal F_p$ for all $1 \leq p < \infty$.
\end{lemma}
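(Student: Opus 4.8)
The plan is to reduce the entire lemma to part~(a), which trades the defining increments $|f(x)-f(y)|^{p-1}$ appearing in $\mathcal F_p$ for the simpler quantities $|f(y)|^{p-1}$. Everything rests on one elementary inequality: for any exponent $r \geq 0$ there is a constant $c_r \geq 1$ with $|a+b|^r \leq c_r(|a|^r + |b|^r)$ for all $a,b \in \R$, where one may take $c_r = 1$ when $0 \leq r \leq 1$ (subadditivity of $t \mapsto t^r$) and $c_r = 2^{r-1}$ when $r \geq 1$ (convexity). The second recurring ingredient is the standing hypothesis $\deg(x) = \sum_{y} b(x,y) < \infty$, which ensures that any ``diagonal'' term of the form $|f(x)|^{r}\deg(x)$ is automatically finite and can thus be absorbed at will.

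For part~(a) I would fix $x \in X$, set $r = p-1 \geq 0$, and estimate in both directions using the inequality above. Applying it with $a = f(x)-f(y)$ and $b = f(y)$ gives
$$\sum_{y} b(x,y)|f(x)-f(y)|^{r} \leq c_r\Big(|f(x)|^{r}\deg(x) + \sum_{y} b(x,y)|f(y)|^{r}\Big),$$
while writing $f(y) = (f(y)-f(x)) + f(x)$ yields
$$\sum_{y} b(x,y)|f(y)|^{r} \leq c_r\Big(\sum_{y} b(x,y)|f(x)-f(y)|^{r} + |f(x)|^{r}\deg(x)\Big).$$
Since $|f(x)|^{r}\deg(x) < \infty$ for every $x$, the two sums are finite for exactly the same functions $f$, which is the asserted set equality.

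Part~(b) is then immediate from the definitions: $|\nabla f|(x) < \infty$ means precisely $\sum_{y} b(x,y)|f(x)-f(y)|^p < \infty$, and since $p = (p+1)-1$ this is the defining condition for membership in $\mathcal F_{p+1}$. For part~(c) I would pass to the description in~(a) and, for fixed $x$, split the neighbours according to whether $|f(y)| \leq 1$ or $|f(y)| > 1$: on the first set $|f(y)|^{p-1} \leq 1$ (as $p-1 \geq 0$), contributing at most $\deg(x)$, and on the second $|f(y)|^{p-1} \leq |f(y)|^{q-1}$ because $p \leq q$ and $|f(y)| > 1$. Hence $\sum_{y} b(x,y)|f(y)|^{p-1} \leq \deg(x) + \sum_{y} b(x,y)|f(y)|^{q-1}$, giving $\mathcal F_q \subseteq \mathcal F_p$.

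Finally, both ``in particular'' assertions follow by bounding $|f(y)|^{p-1}$ over the neighbours of a fixed $x$ and invoking~(a). If $f \in \ell^\infty(X)$ with $|f| \leq M$, then $\sum_{y} b(x,y)|f(y)|^{p-1} \leq M^{p-1}\deg(x) < \infty$. If instead $M := \sup_{u \sim v}|f(u)|/|f(v)| < \infty$, then symmetry of $\sim$ forces $|f(y)| \leq M|f(x)|$ whenever $y \sim x$, so $\sum_{y} b(x,y)|f(y)|^{p-1} \leq (M|f(x)|)^{p-1}\deg(x) < \infty$; in either case $f \in \mathcal F_p$ for every $1 \leq p < \infty$. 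I expect the only genuinely delicate point to be the bookkeeping of the constant $c_r$ across the two regimes $p \leq 2$ and $p \geq 2$ in part~(a); once that inequality is secured, the finiteness of $\deg(x)$ does all the remaining work and the rest is routine.
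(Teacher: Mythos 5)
Your proof is correct, and for parts (a), (b) and both ``in particular'' statements it is essentially the paper's argument: the paper proves (a) by invoking exactly the same elementary inequality $(|w|+|z|)^{p-1} \leq 2^{p}(|w|^{p-1}+|z|^{p-1})$ together with the summability $\deg(x) < \infty$ (deferring the two-sided estimate you wrote out to a cited lemma of Fischer), treats (b) as immediate from the definitions, and derives the final claims from (a) plus $\deg(x)<\infty$, just as you do. The one place you genuinely diverge is part (c): the paper deduces $\mathcal F_q \subseteq \mathcal F_p$ from H\"older's inequality applied with exponent $(q-1)/(p-1)$ to the defining sum $\sum_{y} b(x,y)|f(x)-f(y)|^{p-1}$, again using $\deg(x)<\infty$, whereas you first pass to the characterization in (a) and then split the neighbours of $x$ according to whether $|f(y)| \leq 1$ or $|f(y)| > 1$. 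Both are one-line arguments; yours is marginally more elementary (no H\"older, only monotonicity of $t \mapsto t^{r}$ for $t>1$), while the paper's works directly on the increments and does not route through (a). There is no gap in either route.
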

\begin{proof}
 (a): This follows directly from the summability condition on $b$ and the inequality $(|w| + |z|)^{p-1} \leq 2^p(|w|^{p-1} + |z|^{p-1})$. For more details see \cite[Lemma~2.1]{Fis22a}.
 
 (b): This follows directly from the definition of $|\nabla f|$ and $\mathcal F_{p+1}$.
 
 (c): This follows from the summability condition on $b$ and Hölder's inequality. 
 
 The 'In particular' statements are an immediate consequence of (a) and the summability condition on $b$.
\end{proof}

\begin{lemma}\label{lemma:pchain rule}
Let $f \in C(X)$ and let $I \subseteq \R$ be an interval with $f(X) \subseteq I$. If  $\varphi \in C^1(I)$ is increasing and concave, then  
$$|\nabla(\varphi \circ f)|^p(x) \geq \inf_{y \sim x} (\varphi' (f(x) \wedge f(y)))^p  |\nabla f|^p(x), \quad x \in X.  $$
%
% If, moreover, $\varphi \circ f,  f \in \mathcal F$, then
% %
% $$\mathcal L (\varphi \circ f) \geq (\varphi' \circ f)^{p-1} \mathcal L f. $$
%
\end{lemma}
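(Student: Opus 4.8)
The plan is to reduce the gradient inequality to a single scalar estimate on each edge. By the defining formula
$$|\nabla g|^p(x) = \frac{1}{m(x)}\sum_{y\in X} b(x,y)\,|g(x)-g(y)|^p,$$
it suffices to prove, for every neighbour $y\sim x$, the pointwise bound
$$|\varphi(f(x))-\varphi(f(y))| \ge \varphi'\!\left(f(x)\wedge f(y)\right)\,|f(x)-f(y)|.$$
Once this is in hand I would estimate $\varphi'(f(x)\wedge f(y)) \ge \inf_{z\sim x}\varphi'(f(x)\wedge f(z)) =: c \ge 0$ for each neighbour $y$ (legitimate because $\varphi$ is increasing, so $\varphi'\ge 0$), raise to the $p$-th power to get $|\varphi(f(x))-\varphi(f(y))|^p \ge c^p\,|f(x)-f(y)|^p$, then multiply by $b(x,y)/m(x)$ and sum over $y$. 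Since $c^p=\inf_{z\sim x}(\varphi'(f(x)\wedge f(z)))^p$, the constant factors out of the sum and what remains is exactly $|\nabla f|^p(x)$, which is the assertion.

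To establish the scalar inequality I would invoke the mean value theorem for $\varphi$ on the interval with endpoints $f(x)$ and $f(y)$ (it lies in $I$, where $\varphi$ is $C^1$): there is a point $\xi$ strictly between $f(x)\wedge f(y)$ and $f(x)\vee f(y)$ with
$$\varphi(f(x))-\varphi(f(y)) = \varphi'(\xi)\,(f(x)-f(y)).$$
Taking absolute values and using $\varphi'\ge 0$ reduces everything to comparing $\varphi'(\xi)$ with $\varphi'(f(x)\wedge f(y))$, a comparison governed entirely by the monotonicity of $\varphi'$ supplied by the second–order hypothesis on $\varphi$. The degenerate edges with $f(x)=f(y)$ make both sides vanish and cause no trouble, and the case $|\nabla f|(x)=\infty$ is absorbed by the same pointwise bound together with the gradient conventions already in force.

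The main obstacle is exactly this final comparison. The point $\xi$ is only located above the smaller value $f(x)\wedge f(y)$, so bounding $\varphi'(\xi)$ from below by an endpoint value forces one to identify the endpoint at which $\varphi'$ is smallest over the interval and to match it with $f(x)\wedge f(y)$; this is precisely where the direction of monotonicity of $\varphi'$ must be tracked with care, since the opposite choice of endpoint delivers only the weaker estimate with $\varphi'(f(x)\vee f(y))$. This is also where the absence of a genuine chain rule makes itself felt — in the Euclidean setting one would simply differentiate $\varphi\circ f$ — so the mean value theorem together with careful sign‑ and monotonicity‑bookkeeping is the only available tool. I expect the bulk of the work to be pinning the evaluation point to $f(x)\wedge f(y)$; the reduction in the first paragraph is then purely formal.
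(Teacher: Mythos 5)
Your reduction to the edgewise scalar estimate, factoring the infimum out of the sum, and the appeal to the mean value theorem are all fine, and this is exactly the paper's route: its entire proof reads that the inequality follows from the mean value theorem and from $\varphi'$ being decreasing due to the concavity of $\varphi$. But the final comparison you defer --- ``pinning the evaluation point to $f(x)\wedge f(y)$'' --- cannot be carried out, and no monotonicity bookkeeping will fix it. Concavity makes $\varphi'$ \emph{decreasing}, so on the interval with endpoints $f(x)$ and $f(y)$ the derivative is smallest at the \emph{upper} endpoint $f(x)\vee f(y)$ and largest at $f(x)\wedge f(y)$. The MVT point $\xi$ therefore satisfies $\varphi'(\xi)\ge \varphi'(f(x)\vee f(y))$ but $\varphi'(\xi)\le \varphi'(f(x)\wedge f(y))$ --- the opposite of what you need. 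Indeed your scalar inequality $|\varphi(f(x))-\varphi(f(y))|\ge \varphi'(f(x)\wedge f(y))\,|f(x)-f(y)|$ is false: take $\varphi(t)=\log(1+t)$ on $I=[0,1]$, $f(x)=0$, $f(y)=1$; the left side is $\log 2$, while the right side is $\varphi'(0)\cdot 1=1$.

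What is true, and what the MVT argument actually delivers, is the estimate with $\vee$ in place of $\wedge$:
$$|\nabla(\varphi \circ f)|^p(x) \geq \inf_{y \sim x} \bigl(\varphi' (f(x) \vee f(y))\bigr)^p  |\nabla f|^p(x),$$
equivalently with $\varphi'(f(x))\wedge \varphi'(f(y))$ in the infimum, since $\varphi'$ is decreasing --- which is presumably how the $\wedge$ slipped into the printed statement. That the $\vee$ version is the intended one is confirmed by the lemma's only application, in the proof of Proposition~\ref{proposition:hardy}: there, with $\varphi(t)=t^{1/2}$, the bound invoked is $|\nabla f^{1/2}|^p(x) \geq \inf_{y \sim x} \frac{1}{2^p (f(x) \vee f(y))^{p/2}} |\nabla f|^p(x)$, i.e. exactly the $\vee$ form. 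So the step you flagged as ``the main obstacle'' is not an obstacle to be overcome but the point at which the claim with $\wedge$ fails; the correct move is to settle for $\varphi'(f(x)\vee f(y))$, which is all the paper needs downstream (at the cost of the extra hypothesis $\sup_{x\sim y} f(x)/f(y)<\infty$ when converting back to $\varphi'(f(x))$, as in Proposition~\ref{proposition:hardy}).
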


\begin{proof}
The first inequality is a consequence of the mean value theorem and $\varphi'$ being decreasing due to the concavity of $\varphi$. 
% The second inequality is the content of \cite[Proposition~4.2]{Fis22c}.
\end{proof}

\begin{lemma} \label{lemma:comp}
  Let $f \colon X \to (0,\infty)$ with $f \in \mathcal F_{p+1}$.  Then $f \in \mathcal F_p$ and
 \begin{align*}
   2 f(x) &\mathcal L f(x) = |\nabla f|(x)^p \\
   &+ \frac{1}{m(x)} \sum_{y \in X} b(x,y) (f(x) + f(y))(f(x) - f(y)) |f(x) - f(y)|^{p-2}
  \end{align*}
 for all $x \in X$, with all sums converging absolutely. 
\end{lemma}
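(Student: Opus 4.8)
The plan is to dispose of the membership claim $f \in \mathcal F_p$ first, then establish the identity by a per-edge algebraic decomposition summed against $b(x,y)/m(x)$, checking absolute convergence of each resulting sum as I go. First I would observe that since $p \le p+1$, Lemma~\ref{lemma:formal domain}(c) gives $\mathcal F_{p+1} \subseteq \mathcal F_p$, so $f \in \mathcal F_p$; by Lemma~\ref{lemma:formal domain}(a) this means $\sum_{y} b(x,y)|f(x)-f(y)|^{p-1} < \infty$ for every $x \in X$, which in particular makes $\mathcal L f(x)$ a well-defined (absolutely convergent) sum. At the same time Lemma~\ref{lemma:formal domain}(b) tells us $|\nabla f|(x) < \infty$, i.e. $\sum_y b(x,y)|f(x)-f(y)|^p < \infty$. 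These are the only two summability facts I intend to use.

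The algebraic core is the trivial splitting $2f(x) = (f(x)+f(y)) + (f(x)-f(y))$. Multiplying through by $(f(x)-f(y))|f(x)-f(y)|^{p-2}$ and using $(f(x)-f(y))^2|f(x)-f(y)|^{p-2} = |f(x)-f(y)|^p$, I obtain for each fixed $y$ the identity
$$2f(x)(f(x)-f(y))|f(x)-f(y)|^{p-2} = (f(x)+f(y))(f(x)-f(y))|f(x)-f(y)|^{p-2} + |f(x)-f(y)|^p,$$
where both sides vanish when $f(x)=f(y)$ under the convention $0/0 = 0$, so the relation is valid term by term. Summing against $b(x,y)/m(x)$ and identifying $\frac{1}{m(x)}\sum_y b(x,y)(f(x)-f(y))|f(x)-f(y)|^{p-2} = \mathcal L f(x)$ on the left and $\frac{1}{m(x)}\sum_y b(x,y)|f(x)-f(y)|^p = |\nabla f|^p(x)$ among the terms on the right then yields exactly the asserted formula, once I know every sum converges absolutely.

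The only step demanding any care, and hence the closest thing to an obstacle, is absolute convergence of the new middle sum $\sum_y b(x,y)(f(x)+f(y))|f(x)-f(y)|^{p-1}$. Here I would use the positivity of $f$ together with the crude estimate $f(y) \le f(x) + |f(x)-f(y)|$, giving $f(x)+f(y) \le 2f(x) + |f(x)-f(y)|$ and therefore
$$(f(x)+f(y))|f(x)-f(y)|^{p-1} \le 2f(x)\,|f(x)-f(y)|^{p-1} + |f(x)-f(y)|^p.$$
The first term on the right is summable because $f \in \mathcal F_p$, and the second because $f \in \mathcal F_{p+1}$, so the middle sum converges absolutely; this is precisely the point at which both hypotheses on $f$ are consumed. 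Since the remaining two sums are already controlled by the same two finiteness statements, all rearrangements are justified and the identity follows.
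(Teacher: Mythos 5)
Your proof is correct and follows essentially the same route as the paper: the per-edge identity you get by splitting $2f(x) = (f(x)+f(y)) + (f(x)-f(y))$ is exactly the algebra the paper performs (there phrased as computing $f(x)\mathcal L f(x)$, splitting, and adding another copy of $f\mathcal L f$), and the membership $f \in \mathcal F_p$ is obtained from the same parts of Lemma~\ref{lemma:formal domain}. Your explicit verification of absolute convergence of the middle sum via $f(x)+f(y) \leq 2f(x) + |f(x)-f(y)|$ is a welcome elaboration of a point the paper only asserts, and it correctly identifies where both hypotheses $f \in \mathcal F_p$ and $f \in \mathcal F_{p+1}$ are used.
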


\begin{proof}
  That $f \in \mathcal F_p$ is a consequence of Lemma~\ref{lemma:formal domain} (b) and (c). It also guarantees absolut convergence of all the sums in the statement and in the computations below. We compute
 \begin{align*}
  f(x) \mathcal L f(x)  &=  \frac{1}{m(x)} \sum_{y \in X} b(x,y) (f(x)^2 - f(x) f(y)) |f(x) - f(y)|^{p-2}\\
  &=  \frac{1}{m(x)} \sum_{y \in X} b(x,y) (f(x) -  f(y))^2 |f(x) - f(y)|^{p-2} \\
  &\quad + \frac{1}{m(x)} \sum_{y \in X} b(x,y) (f(x) f(y) - f(y)^2) |f(x) - f(y)|^{p-2}.
 \end{align*}
Adding another $f \mathcal L f$ to both sides of this equation yields 
$$2f(x) \mathcal L f(x) = |\nabla f|^p(x) + \frac{1}{m(x)} \sum_{y \in X} b(x,y) (f(x)^2 - f(y)^2) |f(x) - f(y)|^{p-2},$$
which is the desired statement. 
\end{proof}

\begin{lemma} \label{lemma:main estimate} Let $f \colon X \to (0,\infty)$ with $f \in \mathcal F_p \cap \mathcal F_{p/2 + 1}$. Then $f^{1/2} \in \mathcal F_p$ and
$$ 2 f(x)^{1/2} \mathcal L (f^{1/2})(x) \geq  |\nabla f^{1/2}|^p(x) + \frac{1}{2^{p-2} f(x)^{(p-2)/2}} \mathcal L f(x).$$
%
%  If, moreover, 
%  %
%  $$K = \sup_{x,y \in X, \, x \sim y} \frac{f(x)}{f(y)} < \infty, $$
%  %
% then 
%  %
%  \begin{align*}
%   2 f(x)^{1/2} \mathcal L (f^{1/2})(x)  &\geq  \frac{1}{2^p K^{p/2} f(x)^{p/2}}   |\nabla f|^p(x) + \frac{1}{2^{p-2} f(x)^{(p-2)/2}} \mathcal L f(x).
%  \end{align*}
 %
\end{lemma}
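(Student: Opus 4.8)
The plan is to derive the estimate from the identity of Lemma~\ref{lemma:comp} applied to $g = f^{1/2}$, and then to compare the resulting cross term with $\cL f$ summand by summand. First I would settle the domain bookkeeping together with the assertion $f^{1/2} \in \mathcal F_p$. By Lemma~\ref{lemma:formal domain}~(a) the hypothesis $f \in \mathcal F_{p/2+1}$ reads $\sum_{y} b(x,y) f(y)^{p/2} < \infty$ for all $x$, which, since $f > 0$, is precisely the statement $f^{1/2} \in \mathcal F_{p+1}$. Hence Lemma~\ref{lemma:comp} is applicable to $g = f^{1/2} > 0$ and yields both $f^{1/2} \in \mathcal F_p$ (the first claim) and, using the algebraic simplification $(f^{1/2}(x)+f^{1/2}(y))(f^{1/2}(x)-f^{1/2}(y)) = f(x)-f(y)$, the identity
$$2 f^{1/2}(x)\,\cL(f^{1/2})(x) = |\nabla f^{1/2}|^p(x) + \frac{1}{m(x)}\sum_{y\in X} b(x,y)\,(f(x)-f(y))\,|f^{1/2}(x)-f^{1/2}(y)|^{p-2},$$
with all sums converging absolutely. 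Since $f \in \mathcal F_p$ by assumption, the sum defining $\cL f(x)$ converges absolutely as well.

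Comparing the cross term above with
$$\frac{1}{2^{p-2} f(x)^{(p-2)/2}} \cL f(x) = \frac{1}{m(x)} \sum_{y \in X} b(x,y)\, \frac{(f(x)-f(y))\,|f(x)-f(y)|^{p-2}}{2^{p-2} f(x)^{(p-2)/2}},$$
the claimed inequality reduces to showing, for each $y$,
$$(f(x)-f(y))\,|f^{1/2}(x)-f^{1/2}(y)|^{p-2} \geq \frac{(f(x)-f(y))\,|f(x)-f(y)|^{p-2}}{2^{p-2} f(x)^{(p-2)/2}}.$$
Writing $s = f(x)^{1/2}$ and $t = f(y)^{1/2}$ and using the factorisation $|f(x)-f(y)| = (s+t)\,|s-t|$, the right-hand side equals the left-hand side multiplied by $\big(\tfrac{s+t}{2s}\big)^{p-2}$, so the summand inequality is equivalent to
$$(s^2 - t^2)\,|s - t|^{p-2}\,\Big(1 - \big(\tfrac{s+t}{2s}\big)^{p-2}\Big) \geq 0.$$

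The factor $s^2 - t^2$ carries the sign of $s-t$, so this splits into two cases: for $t < s$ one has $\tfrac{s+t}{2s} < 1$, while for $t > s$ one has $\tfrac{s+t}{2s} > 1$, and in each case the bracket $1 - \big(\tfrac{s+t}{2s}\big)^{p-2}$ must carry the matching sign. The decisive point, and the main obstacle, is the sign of the exponent $p-2$: for $p \geq 2$ the monotonicity of $u \mapsto u^{p-2}$ makes both cases transparent and the pointwise bound is immediate. For $1 < p < 2$ the exponent is negative, the two factors no longer align, and the summand-by-summand comparison breaks down; this regime is where the real work lies and calls for a genuinely different argument rather than the term-by-term estimate used for $p \geq 2$. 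I would therefore organise the proof around the explicit case analysis above, dispatching $p \geq 2$ directly and reserving separate care for the reverse exponent range, keeping the precise powers of $2$ and of $f(x)$ intact throughout.
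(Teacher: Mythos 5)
Your reduction, the domain bookkeeping, and the case $p \geq 2$ are all correct, and they coincide with the paper's own proof: the paper invokes the mean value theorem to write $f(x)^{1/2}-f(y)^{1/2} = \frac{1}{2\xi_{xy}^{1/2}}(f(x)-f(y))$ with $\xi_{xy}$ between $f(x)\wedge f(y)$ and $f(x)\vee f(y)$; since in fact $\xi_{xy}^{1/2} = (f(x)^{1/2}+f(y)^{1/2})/2$, this is exactly your factorisation $|f(x)-f(y)| = (s+t)|s-t|$, and the paper's key step --- replacing $\xi_{xy}$ by $f(x)$ separately on $\{y \colon f(y)\leq f(x)\}$ and $\{y \colon f(y) > f(x)\}$ --- is exactly your summand-by-summand comparison, which is legitimate precisely when $u\mapsto u^{(p-2)/2}$ is nondecreasing, i.e.\ when $p\geq 2$.

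The case you set aside, $1<p<2$, cannot be repaired by a ``genuinely different argument'': the inequality is false there, so your hesitation identifies an error in the paper rather than a gap in your own proposal. By your sign analysis, for $p<2$ every summand in the comparison is $\leq 0$ (vanishing only if $f(y)=f(x)$), so the asserted inequality reverses, strictly as soon as $f$ is nonconstant on the neighbours of $x$. A concrete counterexample: $X=\{x,y\}$, $b(x,y)=1$, $m\equiv 1$, $p=3/2$, $f(x)=4$, $f(y)=1$. Then $\mathcal L(f^{1/2})(x) = 1$, so the left-hand side equals $2\cdot 2\cdot 1 = 4$, while $|\nabla f^{1/2}|^p(x)=1$, $\mathcal L f(x)=\sqrt 3$ and $\frac{1}{2^{p-2}f(x)^{(p-2)/2}} = 2$, so the right-hand side equals $1+2\sqrt 3 \approx 4.46 > 4$. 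The paper's proof performs the substitution $\xi_{xy}\rightsquigarrow f(x)$ with no restriction on $p$, tacitly using monotonicity that holds only for $p\geq 2$; consequently the lemma, and the results that feed on it for $1<p<2$ (Proposition~\ref{proposition:hardy}, Theorem~\ref{thm:boundary hardy} and the corollaries derived from them), are only justified under the additional hypothesis $p\geq 2$.
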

\begin{proof} Lemma~\ref{lemma:formal domain} ensures $f^{1/2} \in  \mathcal F_p$ such that all the quanties in the inequality are well-defined. We let $g = f^{1/2}$. As seen in Lemma~\ref{lemma:comp} we have
 \begin{align*}
  2 g(x) &\mathcal L g(x) = |\nabla g|(x)^p \\
   &+ \frac{1}{m(x)} \sum_{y \in X} b(x,y) (g(x) + g(y))(g(x) - g(y)) |g(x) - g(y)|^{p-2}.
 \end{align*}
We need to estimate the second summand. We let $\nabla_{xy}f  =  f(x) - f(y)$.  Using the mean value theorem we choose $\xi_{xy} \in [f(x) \wedge f(y), f(x) \vee f(y)]$ with
$$f(x)^{1/2} - f(y)^{1/2} = \frac{1}{2 \xi_{xy}^{1/2}} (f(x) - f(y)).$$
Our assumption  $f \in  \mathcal F_p \cap \mathcal F_{p/2+1}$ ensures that all sums in the following computation converge absolutely: 
\begin{align*}
  &\frac{1}{m(x)} \sum_{y \in X} b(x,y) (g(x) + g(y))(g(x) - g(y)) |g(x) - g(y)|^{p-2}\\
  &=  \frac{1}{2^{p-2} m(x)} \sum_{y \in X} b(x,y) (f(x) - f(y))  \frac{1}{\xi_{xy}^{(p-2)/2}}| f(x) - f(y)|^{p-2}\\
  &=  \frac{1}{2^{p-2} m(x)} \sum_{y \in X, f(x) \geq f(y)} b(x,y)  \frac{1}{\xi_{xy}^{(p-2)/2}} \nabla_{xy} f^{p-1} \\& \quad - \frac{1}{2^{p-2} m(x)} \sum_{y \in X, f(x) < f(y)} b(x,y)  \frac{1}{\xi_{xy}^{(p-2)/2}} \nabla_{yx} f^{p-1}  \\
     &\geq \frac{1}{2^{p-2} m(x)} \sum_{y \in X, f(x) \geq f(y)} b(x,y)  \frac{1}{f(x)^{(p-2)/2}} \nabla_{xy} f^{p-1} \\& \quad - \frac{1}{2^{p-2} m(x)} \sum_{y \in X, f(x) < f(y)} b(x,y)  \frac{1}{f(x)^{(p-2)/2}} \nabla_{yx} f^{p-1}\\
     &= \frac{1}{2^{p-2} f(x)^{(p-2)/2}} \mathcal L f(x). \hfill \qedhere
\end{align*}
%
%  
%  A direct computation shows
%  %
%  \begin{align*}
%   g\mathcal L g (x)  &= \sum_{y \in X} b(x,y) g(x)^2 |\nabla_{xy} g|^{p-2} - \sum_{y \in X} b(x,y) g(x)g(y) |\nabla_{xy} g|^{p-2}\\
%   &=  \sum_{y \in X} b(x,y) |\nabla_{xy} g|^{p} +  \sum_{y \in X} b(x,y)g(y) \nabla_{xy} g  |\nabla_{xy} g|^{p-2}.
%  \end{align*}
%  %
%  The discrete chain rule Lemma~\ref{lemma:chain rule} applied to $g = \varphi \circ f$ with $\varphi \colon (0,\infty) \to \R$, $\varphi(t) = t^{1/p}$ and the graph $b_g =  (g \otimes g)b$  yields
%  %
%  \begin{align*}
%   &\sum_{y \in X} b(x,y)g(y) \nabla_{xy} g |\nabla_{xy} g |^{p-2}  \\
% &\geq \frac{1}{p^{p-1}} \frac{1}{ f(x)^{(p-1)^2/p}} \sum_{y \in X} b(x,y)g(y)   \nabla_{xy} f |\nabla_{xy} f |^{p-2}\\
% &= \frac{1}{p^{p-1}} \frac{f(x)^{1/p}}{ f(x)^{(p-1)^2/p}}  \sum_{y \in X} b(x,y) \frac{g(y)}{g(x)}   \nabla_{xy} f |\nabla_{xy} f |^{p-2}\\
% &\geq \frac{1}{p^{p-1}K^{1/p}} \frac{1}{ f(x)^{p-2} } \sum_{y \in X} b(x,y)   \nabla_{xy} f |\nabla_{xy} f |^{p-2}.
%  \end{align*}
 %
\end{proof}

The following lemma is the content of \cite[Lemma~3.5]{Fis22b}.

\begin{lemma}[Picone's inequality]
Let $1 < p < \infty$. Let $h \colon X \to (0,\infty)$ with $h \in \mathcal F_p$. For every $\varphi \in C_c(X)$ we have
$$\mathcal Q(\varphi) \geq \frac{2}{p}  \sum_{x \in X} \frac{\mathcal L h(x)}{h(x)^{p-1}} |\varphi(x)|^p m(x).$$
\end{lemma}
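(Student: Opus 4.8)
The plan is to reduce the stated global inequality to a single pointwise (edgewise) inequality and then prove that pointwise inequality by an elementary minimization.

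First I would symmetrize the right-hand side. Writing $\Phi_p(r) = r|r|^{p-2}$ for the signed power, the factor $m(x)$ cancels the $1/m(x)$ in $\mathcal L$, so
$$\sum_{x \in X} \frac{\mathcal L h(x)}{h(x)^{p-1}}|\varphi(x)|^p m(x) = \sum_{x,y \in X} b(x,y)\frac{|\varphi(x)|^p}{h(x)^{p-1}}\Phi_p(h(x)-h(y)).$$
Since $\varphi \in C_c(X)$ only finitely many $x$ contribute, and for each such $x$ the inner sum $\sum_y b(x,y)|h(x)-h(y)|^{p-1}$ is finite because $h \in \mathcal F_p$; hence this double family is absolutely summable and may be relabeled. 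Using the symmetry of $b$ and the antisymmetry $\Phi_p(h(y)-h(x)) = -\Phi_p(h(x)-h(y))$, I would rewrite it as $\tfrac12\sum_{x,y} b(x,y)\Phi_p(h(x)-h(y))\big(\tfrac{|\varphi(x)|^p}{h(x)^{p-1}}-\tfrac{|\varphi(y)|^p}{h(y)^{p-1}}\big)$. Comparing with $\mathcal Q(\varphi) = \tfrac1p\sum_{x,y} b(x,y)|\varphi(x)-\varphi(y)|^p$, the claim reduces to the edgewise inequality
$$\Phi_p(s-t)\Big(\frac{a^p}{s^{p-1}}-\frac{b^p}{t^{p-1}}\Big) \le |a-b|^p$$
for $a,b \ge 0$ and $s,t>0$ (where $a=|\varphi(x)|$, $b=|\varphi(y)|$, $s=h(x)$, $t=h(y)$); here I have already used $\big||\varphi(x)|-|\varphi(y)|\big| \le |\varphi(x)-\varphi(y)|$ to pass to absolute values.

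For the edgewise inequality I would exploit its symmetry under $(a,s)\leftrightarrow(b,t)$ to assume $s\ge t$, and its homogeneity to normalize $t=1$, $s=1+\rho$ with $\rho\ge0$; then $\Phi_p(s-t)=\rho^{p-1}$ and, after simplifying $(s-t)/s = \rho/(1+\rho)$, the inequality becomes $\big(\tfrac{\rho}{1+\rho}\big)^{p-1}a^p \le |a-b|^p + \rho^{p-1}b^p$. The key observation is that the right-hand side, viewed as a function of $b\ge0$, is minimized at $b=a/(1+\rho)$: a one-line computation gives $\min_{b\ge0}\big[|a-b|^p+\rho^{p-1}b^p\big] = a^p\big(\tfrac{\rho}{1+\rho}\big)^{p-1}$, which is exactly the left-hand side. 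Thus the edgewise inequality holds, with equality precisely along $a/s=b/t$ (the discrete ``ground state'' direction).

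Finally I would sum the edgewise inequality over all ordered pairs $(x,y)$, which reproduces $\mathcal Q(\varphi)$ on the right and $\tfrac2p\sum_x \tfrac{\mathcal L h(x)}{h(x)^{p-1}}|\varphi(x)|^p m(x)$ on the left, yielding the assertion. The main obstacle is really the edgewise inequality: the continuous Picone identity is proved by a chain rule followed by Young's inequality, neither of which is available in the discrete non-local setting, and the substitute is the minimization above (alternatively one can mimic Young's inequality directly, but the sign bookkeeping for $1<p<2$ versus $p\ge 2$ and for $s\ge t$ versus $s<t$ is then more delicate). Justifying the symmetrization through the absolute summability coming from $h\in\mathcal F_p$ and $\varphi\in C_c(X)$ is a second, more routine, point to be handled with care.
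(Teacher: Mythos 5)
Your proof is correct. Note, however, that the paper does not prove this lemma at all: it is quoted verbatim as the content of \cite[Lemma~3.5]{Fis22b}, so there is no internal argument to compare against, and what you have written is a self-contained substitute for that citation. Your route --- cancel the $m(x)$ factors, symmetrize using the antisymmetry of $\Phi_p(r)=r|r|^{p-2}$ (legitimate here because $\varphi\in C_c(X)$ and $h\in\mathcal F_p$ give absolute summability), and then reduce to the edgewise inequality
$$\Phi_p(s-t)\Bigl(\frac{a^p}{s^{p-1}}-\frac{b^p}{t^{p-1}}\Bigr)\le|a-b|^p,\qquad a,b\ge0,\ s,t>0,$$
is exactly the standard mechanism behind discrete Picone inequalities, and the bookkeeping (the symmetrization producing the factor $2$, the factor $1/p$ in $\mathcal Q$, and the passage $\bigl||\varphi(x)|-|\varphi(y)|\bigr|\le|\varphi(x)-\varphi(y)|$) checks out to give precisely the constant $2/p$. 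The one genuinely nice feature of your write-up is the proof of the edgewise inequality by minimizing $b\mapsto|a-b|^p+\rho^{p-1}b^p$ after the normalization $t=1$, $s=1+\rho$: the minimizer $b=a/(1+\rho)$ lies in the smooth region $(0,a)$, the function is convex so the critical point is the global minimum, and the minimal value $a^p\bigl(\rho/(1+\rho)\bigr)^{p-1}$ is exactly the left-hand side. This handles all $1<p<\infty$ and all sign configurations uniformly, avoiding the case distinctions (and the Young-inequality gymnastics used in the continuum) that otherwise clutter such proofs; it also identifies the equality case $a/s=b/t$, consistent with the ground-state interpretation. In short: correct, complete modulo routine convexity remarks, and a reasonable stand-alone replacement for the external reference the paper relies on.
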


\begin{proposition}[Hardy inequality]  \label{proposition:hardy}
 Let $f \colon X \to (0,\infty)$ with $f \in \mathcal F_p \cap \mathcal F_{p/2 + 1}$ and assume there exists $\lambda \in \R$ with $\mathcal L f \geq \lambda f^{p-1}$. Then for all $\varphi \in C_c(X)$
 \begin{align*}
  \mathcal Q(\varphi) &\geq \frac{1}{p} \sum_{x \in X} \frac{|\nabla f^{1/2}|^p(x)}{f(x)^{p/2}} |\varphi(x)|^p m(x) +  \frac{\lambda}{p 2^{p-2}} \av{\varphi}_p^p.
 \end{align*}
 If, moreover, 
  $$K = \sup_{x,y \in X, \, x \sim y} \frac{f(x)}{f(y)} < \infty, $$
 then for all $\varphi \in C_c(X)$
 \begin{align*}
  \mathcal Q(\varphi) &\geq \frac{1}{p 2^p K^{p/2}} \sum_{x \in X} \frac{|\nabla f|^p(x)}{f(x)^p} |\varphi(x)|^p m(x) +  \frac{\lambda}{p 2^{p-2}} \av{\varphi}_p^p.
 \end{align*}
\end{proposition}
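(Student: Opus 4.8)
The plan is to derive both inequalities from Picone's inequality applied to $h = f^{1/2}$ rather than to $f$ itself. The point of this choice is that $h^{p-1} = f^{(p-1)/2}$, so the weight appearing in Picone's inequality can be rewritten as $\frac{2\cL h(x)}{h(x)^{p-1}} = \frac{2 h(x)\cL h(x)}{h(x)^p} = \frac{2 f(x)^{1/2}\cL(f^{1/2})(x)}{f(x)^{p/2}}$, and the numerator $2 f^{1/2}\cL(f^{1/2})$ is precisely the quantity bounded from below in Lemma~\ref{lemma:main estimate}.

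First I would check that Picone's inequality applies: by Lemma~\ref{lemma:main estimate} the hypothesis $f \in \mathcal F_p \cap \mathcal F_{p/2+1}$ gives $f^{1/2} \in \mathcal F_p$, and since $f > 0$ all weights are well defined. Applying Picone's inequality to $h = f^{1/2}$ and rewriting as above gives, for $\varphi \in C_c(X)$,
$$\cQ(\varphi) \geq \frac{1}{p}\sum_{x\in X}\frac{2 f(x)^{1/2}\cL(f^{1/2})(x)}{f(x)^{p/2}}|\varphi(x)|^p m(x),$$
the sum being finite since $\varphi$ has finite support. Next, for each $x$ in the support I would insert the pointwise bound of Lemma~\ref{lemma:main estimate}, divide by $f(x)^{p/2}$, and use $f(x)^{(p-2)/2}f(x)^{p/2} = f(x)^{p-1}$ to obtain
$$\frac{2 f(x)^{1/2}\cL(f^{1/2})(x)}{f(x)^{p/2}} \geq \frac{|\nabla f^{1/2}|^p(x)}{f(x)^{p/2}} + \frac{1}{2^{p-2}}\frac{\cL f(x)}{f(x)^{p-1}}.$$
Finally the hypothesis $\cL f \geq \lambda f^{p-1}$ replaces the last quotient by $\lambda$, and summing against $|\varphi|^p m$ produces exactly $\frac{1}{p}\sum_x \frac{|\nabla f^{1/2}|^p}{f^{p/2}}|\varphi|^p m + \frac{\lambda}{p 2^{p-2}}\aV{\varphi}_p^p$, which is the first claimed inequality.

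For the second inequality I would keep the $\lambda$-term untouched and only replace the gradient weight $|\nabla f^{1/2}|^p/f^{p/2}$ by a multiple of $|\nabla f|^p/f^p$. This is a purely pointwise estimate coming from the mean value theorem underlying Lemma~\ref{lemma:pchain rule}, applied to the increasing concave function $t\mapsto t^{1/2}$: on each edge $x \sim y$ there is $\xi$ between $f(x)$ and $f(y)$ with $f(x)^{1/2}-f(y)^{1/2} = \tfrac{1}{2}\xi^{-1/2}(f(x)-f(y))$, and since $\xi \leq f(x)\vee f(y) \leq K f(x)$ by the neighbor-ratio assumption, we get $|f(x)^{1/2}-f(y)^{1/2}| \geq \tfrac{1}{2}(K f(x))^{-1/2}|f(x)-f(y)|$. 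Raising to the $p$-th power, summing against $b(x,y)/m(x)$, and dividing by $f(x)^{p/2}$ yields $\frac{|\nabla f^{1/2}|^p(x)}{f(x)^{p/2}} \geq \frac{1}{2^p K^{p/2}}\frac{|\nabla f|^p(x)}{f(x)^p}$. Substituting this into the first inequality gives the second.

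The routine part is the exponent bookkeeping in the first step; the only genuinely delicate point is the direction of the mean value estimate in the second step. The intermediate point $\xi$ must be bounded by the \emph{larger} of the two values $f(x),f(y)$, and it is exactly this $\vee$ that forces the neighbor-ratio hypothesis and produces the factor $K^{p/2}$; had $\xi$ been controllable by $f(x)$ alone, no such constant would appear. I would therefore take care to track that the worst case is $f(y) > f(x)$.
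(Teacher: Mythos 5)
Your proposal is correct and follows essentially the same route as the paper's proof: Picone's inequality applied to $h = f^{1/2}$, the pointwise lower bound of Lemma~\ref{lemma:main estimate} combined with $\mathcal L f \geq \lambda f^{p-1}$ for the first inequality, and the mean value theorem for $t \mapsto t^{1/2}$ (the content of Lemma~\ref{lemma:pchain rule}) together with $f(x) \vee f(y) \leq K f(x)$ for the second. Your careful remark that the intermediate point $\xi$ is only controlled by the \emph{larger} value $f(x) \vee f(y)$ is exactly the bound the paper's proof uses when invoking Lemma~\ref{lemma:pchain rule}, so the two arguments coincide in every essential step.
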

\begin{proof}
 By Lemma~\ref{lemma:formal domain} the assumption on $f$ yields $f^{1/2} \in \mathcal F_p$.  Applying Picone's inequality to $h = f^{1/2}$ shows
 $$\mathcal Q(\varphi) \geq \frac{1}{p}  \sum_{x \in X} \frac{2f(x)^{1/2} \mathcal L (f^{1/2})(x)}{f(x)^{p/2}} |\varphi(x)|^p m(x)$$
 for all $\varphi \in C_c(X)$. From Lemma~\ref{lemma:main estimate} we infer 
 \begin{align*}
  \frac{2f(x)^{1/2} \mathcal L (f^{1/2})(x)}{f(x)^{p/2}}  &\geq \frac{ |\nabla f^{1/2}|^p(x)}{f(x)^{p/2}}   + \frac{1}{2^{p-2} f(x)^{p - 1}} \mathcal L f(x)\\
  &\geq \frac{ |\nabla f^{1/2}|^p(x)}{f(x)^{p/2}}   + \frac{\lambda}{2^{p-2}}.
 \end{align*}
 Combining both inequalities shows the first claim. For the second inequality we use Lemma~\ref{lemma:pchain rule} to estimate
$$|\nabla f^{1/2}|^p(x) \geq \inf_{x \sim y} \frac{1}{2^p (f(x) \vee f(y))^{p/2}} |\nabla f|^p(x).   $$
Since by assumption $f(y) \leq K f(x)$ for all $x \sim y$, we  arrive at 
 \begin{align*}
  |\nabla f^{1/2}|^p(x) &\geq  \frac{1}{2^p K^{p/2} f(x)^{p/2}}   |\nabla f|^p(x). \hfill \qedhere
 \end{align*}
\end{proof}

\section{Proof of the main results}

In this section we prove the main results Theorem~\ref{thm:Theorem 1} and Theorem~\ref{thm:boundary hardy}.

\begin{proof}[Proof of Theorem~\ref{thm:Theorem 1}]
Let $f \in H^{1,p}(X,m)$ with $|\nabla h|/h f \in \ell^p(X,m)$ be given. According to Proposition~\ref{prop:H0 equals D0 cap ellp} it suffices to show $f \in \mathcal D_0^p$.

We fix $o \in X$. For $n \in \N$ we consider the function
$$\chi_n \colon X \to \mathbb R,  \quad \chi_n(x) = (h(x) - 1/n)_+ \cdot \left( (2 - \sigma(x,o)/n)_+ \wedge 1 \right).$$
Then $|\chi_n| \leq h$,  $\chi_n \to h$ pointwise, as $n \to \infty$, and 
$$ \supp \chi_n \subseteq B^\sigma_{2n}(o) \cap \{x \in X \mid h(x) \geq 1/n\}.$$
By our assumption on $\sigma$ the function $\deg/m$ is bounded on the latter set. 

Let $f_n = (\chi_n / h) f$. The support of $f_n$ is cointained in the support of $\chi_n$ and so Lemma~\ref{lemma:sets of bounde deg} implies $f_n \in H^{1,p}_0(X,m)$. Since $f_n \to f$ pointwise, by Proposition~\ref{prop:lower semicontinuity}  it suffices to show that $(\mathcal Q(f_n))$ is bounded for establishing $f \in \mathcal D_0^p$. 

For $g \in C(X)$ and $x,y \in X$ we write $\nabla_{xy} g = g(x) - g(y)$. We use the discrete product rule $\nabla_{xy}(gh) = g(x) \nabla_{xy} h + h(y) \nabla_{xy} g$  and the inequality $(|w| + |z|)^{p} \leq 2^p(|w|^{p} + |z|^{p})$ to estimate $p \mathcal Q(f_n)$ by 
\begin{align} \label{eq:inequality}
&\sum_{x,y \in X} b(x,y) |\nabla_{xy} f_n|^p \notag \\
 &= \sum_{x,y \in X} b(x,y)\left| \chi_n(x) \nabla_{xy} (f/h) + \frac{f(y)}{h(y)} \nabla_{xy} \chi_n  \right|^p \tag{$\heartsuit$}\\
 &\leq 2^p \sum_{x,y \in X} b(x,y)\left|\chi_n(x) \nabla_{xy} (f/h)\right|^p  + 2^p \sum_{x,y \in X} b(x,y) \left|\frac{f(y)}{h(y)} \nabla_{xy} \chi_n  \right|^p. \notag 
\end{align}

We use the discrete product rule and 
$$\nabla_{xy} \frac{1}{h} = \frac{1}{h(x)h(y)} \nabla_{yx} h$$
to estimate the first summand of the right hand side in Inequality~\ref{eq:inequality}: 
\begin{align*}
  &\sum_{x,y \in X} b(x,y)\left|\chi_n(x) \nabla_{xy} (f/h)\right|^p\\
 &= \sum_{x,y \in X} b(x,y) \left| \frac{\chi_n(x)}{h(x)} \nabla_{xy} f + \chi_n(x) f(y) \nabla_{xy}\frac{1}{h} \right|^p\\
 &= \sum_{x,y \in X} b(x,y) \left| \frac{\chi_n(x)}{h(x)} \nabla_{xy} f + \frac{\chi_n(x) f(y)}{h(x) h(y)}  \nabla_{yx}h \right|^p\\
 &\leq 2^p \sum_{x,y \in X} b(x,y)| \nabla_{xy} f |^p + 2^p \sum_{x,y\in X} b(x,y)  \frac{ |f(y)|^p}{|h(y)|^p}  |\nabla_{yx}h |^p \\
 &= p 2^p \mathcal Q(f) + 2^p \aV{f |\nabla h|/h}_p^p.
\end{align*}

Next we turn to the second summand of the right hand side in Inequality~\ref{eq:inequality}. We let $\rho \colon X \to \R, \, \rho(x) = (2 - \sigma(x,o)/n)_+ \wedge 1$. Then $|\rho | \leq 1$ and $\rho$ is $1/n$-Lipschitz, which yields $|\nabla \rho| \leq 1/n$. Using this and the discrete product rule we continue to estimate
\begin{align*}
&\sum_{x,y \in X} b(x,y) \left|\frac{f(y)}{h(y)} \nabla_{xy} \chi_n  \right|^p \\
&= \sum_{x,y \in X} b(x,y) \left|\frac{f(y) (h(y) - 1/n)_+}{h(y)}  \nabla_{xy} \rho + \frac{f(y)}{h(y)} \rho(x) \nabla_{xy} (h - 1/n)_+   \right|^p\\
&\leq 2^p \sum_{x,y \in X} b(x,y) |f(y)|^p   |\nabla_{xy} \rho|^p + 2^p \sum_{x,y \in X} b(x,y) \frac{|f(y)|^p}{|h(y)|^p} |\nabla_{xy} (h - 1/n)_+|^p\\
&\leq 2^p \sum_{y \in X} |f(y)|^p m(y) |\nabla \rho|^p(y) + 2^p \sum_{y \in X} |f(y)|^p \frac{|\nabla h|^p(y)}{|h(y)|^p} m(y)\\
&\leq 2^p \av{f}_p^p + 2^p \aV{f |\nabla h|/h}_p^p.
\end{align*}
For the second to last inequality we  used $|\nabla_{xy} (h -1/n)_+| \leq |\nabla_{xy} h|.$

Combining all these estimates yields a constant $C > 0$ such that 
$$C \mathcal Q(f_n) \leq \mathcal Q(f) +  \av{f}_p^p +  \aV{f |\nabla h|/h }_p^p. $$
By our assumption on $f$ the right hand side of this inequality is finite and we arrive at the desired conclusion. 
\end{proof}

\begin{proof}[Proof of Theorem~\ref{thm:boundary hardy}] 
By assumption there exists a finite $K \subseteq X$  with  $\cL h \geq \lambda h^{p-1}$ on $X \setminus K$. Since $h (x) > 0$ for all $x \in X$ and $\mathcal L h$ is bounded below on the finite set $K$,  there exists $\lambda' \in \R$ with  $\mathcal L h \geq \lambda' h$ on $K$. Hence, we infer $\mathcal L h \geq (\lambda \wedge \lambda') h$ on $X$. With this at hand the first inequality in Proposition~\ref{proposition:hardy} yields the existence of $C > 0$ such that
$$ C \sum_{x \in X} |\varphi(x)|^p \frac{|\nabla h^{1/2}|^p(x)}{h(x)^{p/2}} m(x) \leq \mathcal Q(\varphi) + \aV{\varphi}_p^p$$
for all $\varphi \in C_c(X)$. By definition  $H^{1,p}_0(X,m)$ is the closure of $C_c(X)$ in $H^{1,p}(X,m)$. Since the left-hand side of the above inequality is lower semicontinuous with respect to pointwise convergence and since $\av{\cdot}_{H^1(X,m)}$-convergence implies pointwise convergence, the inequality extends to $\varphi \in H^{1,p}_0(X,m)$. The same reasoning with the use of the second inequality in Proposition~\ref{proposition:hardy} shows the claim. 
\end{proof}

\section{Spherically symmetric trees} \label{section:examples}
In this section we apply our main results to spherically symmetric trees. We start with a short recap on spherically symmetric graphs, then provide a discussion on boundaries of trees in general  and finally apply everything to certain spherically symmetric trees.

\subsection{Weakly spherically symmetrics graphs}

Let $b$ be a graph on $X$ with combinatorial metric $d:= d_1$, which was introduced in Example~\ref{example:path metrics}, and fix $o \in X$. A function  $f \colon X \to \R$ is called spherically symmetric (with respect to $o$) if $f(x) = f(y)$ whenever $d(x,o) = d(y,o)$. In this case, there exists $\tilde f \colon \mathbb N_0 \to \R$ such that $f(x) = \tilde f (n)$ for $x \in X$ with $d(x,o) = n$. In what follows we abuse notation and do not distinguish between $\tilde f$ and $f$. 

For $n \in \mathbb N_0$ we write $S_n = \{x \in X \mid d(o,x) = n\}$ for the combinatorial spheres around $o$. The incoming and outgoing degree functions $\deg_-$ and $\deg_+$ are defined by 
$$\deg_{\pm} \colon X \to [0,\infty),\quad \deg_{\pm}(x) = \sum_{y \in S_{d(x,o) \pm 1}} b(x,y),  $$
with the convention $\deg_-(o) = 0$. 
We call a graph $b$ with measure $m$ {\em weakly spherically symmetric} with respect to $o$ if the functions $\kappa_\pm := \deg_{\pm}/m$ are spherically symmetric with respect to $o$. We will only deal with the special case where both $\deg_{\pm}$ and $m$ are spherically symmetric. 

The important property of weakly spherically symmetric graphs is that whenever $f \in \mathcal F_p$ is spherically symmetric, then $\mathcal L f$ is also spherically symmetric with
\begin{align*}
 \mathcal Lf (n) &=  \kappa_+(n) (f(n) - f(n+1))|f(n) - f(n+1)|^{p-2} \\
 & \quad + \kappa_-(n) (f(n) - f(n-1))|f(n) - f(n-1)|^{p-2} 
\end{align*}
if $n \geq 1$ and 
$$\mathcal Lf (0) =    \kappa_+(0)(f(0) - f(1))|f(0) - f(1)|^{p-2}. $$
For $f \in C(X)$ we let 
$$A f \colon \N_0 \to \R, \quad A(n) = \sum_{x \in S_n} f(x). $$
 For spherically symmetric $m$ we then have
$$\av{f}_p^p = \sum_{n = 0}^\infty A|f|^p (n) m(n).$$

We refer to \cite{KLW} for more details in the case when $p = 2$. 

\subsection{Trees and metric boundaries} Let  $b$ on $X$ be a {\em connected tree}, i.e., between any two different points in $X$ there is a unique injective path connecting them. Moreover, we assume  $b(x,y) \in \{0,1\}$ for all $x,y \in X$. This and our summability condition on $b$ yield that $b$ is locally finite. We fix a root $o \in X$. It is readily verified that $b$ being  a tree is equivalent to $\deg_-(x) = 1$ for all $x \in X\setminus \{o\}$.
 
 We describe the metric boundary of a tree with respect to a path metric.  On a tree we say that $z \in X$ is an {\em ancestor} of $x \in X$ (with respect to $o$)  if every path from $x$ to $o$ an passes through $z$. Any set $\emptyset \neq A \subseteq  X$ possesses a unique {\em greatest common ancestor} $\wedge A$, which is an ancestor of all $x \in A$ and if $z \in X$ is another ancestor of all $x \in A$, then $d(z,o) \leq d(\wedge A,o)$. Below we simply write $x \wedge y$ for $\wedge \{x,y\}$.
 
 Let $w \colon X \times X \to [0,\infty)$ with $w(x,y) > 0$ for all $x \sim y$. As discussed in Example~\ref{example:path metrics} the corresponding path metric $d_w$ induces the discrete topology on $X$. Hence, as the union of the open sets $\{x\}, x \in X$, the set $X$ is open in the completion and so $\partial_{d_w} X = \overline{X}^{d_w} \setminus X$ is closed.  For computing the metric boundary with respect to $d_w$ we use two properties of path metrics on trees:  For $x,y \in X$ we have
 $$d_w(x,y) = d_w(x,x\wedge y) + d_w(x \wedge y,y) = L_w(\gamma_{x,x \wedge y}) + L_w(\gamma_{x \wedge y, y}), $$
 where $\gamma_{w,z}$ denotes the unique injective path  connecting $w,z \in X$. Moreover, if $x,y,z \in X$ satisfy $x \wedge y = x$ and $y \wedge z   = y$,  then 
 $$d_w(x,z) = d_w(x,y) + d_w(y,z).$$

 \begin{lemma}\label{lemma:monotone paths}
 Let $d_w$ be a path metric as above such that $\partial_{d_w} X \neq \emptyset$.  For every $x \in \partial_{d_w}  X$ there exists an infinite  injective path $(x_n)$ with limit $x$ and $x_n \wedge x_m = x_m$ for $m \geq n$. In particular, for $n > m$ we have
 $$d_w(x_n,x_m) = \sum_{k = m}^{n-1} w(x_k,x_{k+1}).$$
 \end{lemma}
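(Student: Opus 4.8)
The plan is to construct the desired path by a greedy, inductive procedure that walks from the root $o$ toward the boundary point $x \in \partial_{d_w} X$, at each step choosing the neighbor lying ``closer'' to $x$. Since $X$ is a locally finite connected tree, I would set $x_0 = o$ and, given $x_n$, consider the finitely many children of $x_n$ (neighbors $y$ with $x_n \wedge y = x_n$). For each such child $y$ the whole subtree hanging below $y$ is at $d_w$-distance at least $d_w(x_n, y) > 0$ from $x_n$, so there should be exactly one child $x_{n+1}$ whose subtree ``contains'' $x$ in the sense that $x$ lies in the closure of that subtree. By construction this gives $x_n \wedge x_m = x_m$ for $m \le n$, i.e. each $x_m$ is an ancestor of $x_n$.

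The key technical point is to make precise what it means for $x$ to lie in a given subtree, since $x$ is only a Cauchy-boundary point, not an actual vertex. Here I would use the additivity of $d_w$ on monotone chains quoted just before the lemma: for any vertex $z$, writing $T_y$ for the subtree below a child $y$ of $x_n$, every vertex $z \in T_y$ satisfies $d_w(o,z) = d_w(o,x_n) + d_w(x_n,y) + d_w(y,z)$, and vertices in different subtrees are separated by passing back through $x_n$. The natural selection rule is to pick the child $x_{n+1}$ for which $d_w(x, x_{n+1})$ is minimal (equivalently, $\sigma(x,\cdot) = d_w$ extended to $\overline{X}^{d_w}$ is realized along that branch). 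One must check this is consistent: the chosen branch at step $n+1$ is a sub-branch of the one chosen at step $n$, which follows from the tree structure together with the additivity formula, since any approximating sequence $z_j \to x$ must eventually stay within a single child-subtree of $x_n$ (otherwise the $z_j$ would repeatedly traverse the positive distance $2\min_y d_w(x_n,y)$ and fail to be Cauchy).

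Once the branch $(x_n)$ is fixed, I would verify the two asserted conclusions. The distance formula is immediate from the monotone-chain additivity: since $x_n \wedge x_m = x_m$ for $m \le n$, repeated application of $d_w(x,z) = d_w(x,y) + d_w(y,z)$ along the chain $x_m, x_{m+1}, \ldots, x_n$ telescopes to
$$
d_w(x_n,x_m) = \sum_{k=m}^{n-1} d_w(x_{k+1},x_k) = \sum_{k=m}^{n-1} w(x_k,x_{k+1}),
$$
the last equality because consecutive $x_k \sim x_{k+1}$ and the single-edge path realizes $d_w$ between neighbors in a tree. To see that $x_n \to x$, I would argue that $d_w(x_n, x) \to 0$: by the selection rule $d_w(x_n,x)$ is nonincreasing and bounded below; if it stabilized at some $\varepsilon > 0$ then any sequence approaching $x$ would have to remain in the subtree below $x_n$ yet avoid a $d_w$-ball, contradicting that $x$ is a genuine limit point realized through this branch.

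The main obstacle I anticipate is the well-definedness and consistency of the branch selection, namely proving that the ``direction toward $x$'' is unique at each vertex and compatible across steps. This rests entirely on the tree's additivity properties and local finiteness: local finiteness guarantees the infimum over finitely many children is attained, while additivity guarantees a Cauchy sequence converging to $x$ cannot split its tail across two subtrees. Handling the boundary point $x$ purely through approximating vertex sequences (rather than as a vertex) is the delicate part, but the separation estimate ``crossing between two child-subtrees of $x_n$ costs at least $2 \min_y d_w(x_n,y) > 0$'' should make the Cauchy-sequence argument rigorous.
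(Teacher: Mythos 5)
Your proposal cannot be compared line by line with the paper's argument, because the paper does not actually prove this lemma: it outsources the existence of the monotone path to the reference [LPS23, Proposition~3.7] and only remarks that the ``In particular'' clause follows by induction from the stated properties of path metrics on trees. Your self-contained construction is therefore the natural thing to attempt, and its skeleton is sound: the greedy branch selection is well defined and consistent (existence and uniqueness of the child subtree whose closure contains $x$ do follow from your observation that a Cauchy sequence cannot cross between two child subtrees of $x_n$ infinitely often, each crossing costing at least $2\min_{y}w(x_n,y)>0$ by local finiteness), the resulting path is injective with the correct ancestor relation, and your telescoping argument for $d_w(x_n,x_m)=\sum_{k=m}^{n-1}w(x_k,x_{k+1})$ is exactly the induction the authors allude to.

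There is, however, a genuine gap at the crux of the lemma, namely the proof that $x_n\to x$. Your argument --- ``if $d_w(x_n,x)$ stabilized at some $\varepsilon>0$ then any sequence approaching $x$ would have to remain in the subtree below $x_n$ yet avoid a $d_w$-ball, contradicting that $x$ is a genuine limit point realized through this branch'' --- is circular: that $x$ is ``realized through this branch,'' i.e.\ that sequences converging to $x$ come arbitrarily close to the branch, is precisely what must be shown, and there is no immediate contradiction in a Cauchy sequence lying below every $x_n$ while keeping distance $\geq\varepsilon$ from all of them. Note also that your separation constants $2\min_y w(x_n,y)$ may tend to $0$ as $n\to\infty$, so they cannot rule this scenario out by themselves. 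The missing idea is a greatest-common-ancestor estimate that is \emph{not} bounded by these constants. Let $(z_j)$ be a Cauchy sequence converging to $x$, and let $N(j)$ be the largest $n$ such that $x_n$ is an ancestor of (or equal to) $z_j$; this is finite since $x_n$ has combinatorial depth $n$, and $N(j)\to\infty$ by your tail argument. If $j'$ is so large that $z_{j'}$ lies below $x_{N(j)+1}$, then $z_j\wedge z_{j'}=x_{N(j)}$, because a common ancestor strictly below $x_{N(j)}$ would be a descendant of $x_{N(j)+1}$ and would force $z_j$ below $x_{N(j)+1}$, contradicting the maximality of $N(j)$. Hence
$$d_w(z_j,z_{j'})=d_w(z_j,x_{N(j)})+d_w(x_{N(j)},z_{j'})\geq d_w(z_j,x_{N(j)}),$$
so Cauchyness forces $d_w(z_j,x_{N(j)})\to 0$ as $j\to\infty$, and therefore $d_w(x,x_{N(j)})\leq d_w(x,z_j)+d_w(z_j,x_{N(j)})\to 0$. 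Since $n\mapsto d_w(x,x_n)$ is nonincreasing (this does follow from your additivity formula, as $d_w(x,x_n)=d_w(x,x_{n+1})+w(x_n,x_{n+1})$ in the limit over approximating sequences), this yields $x_n\to x$. With this insertion your construction becomes a complete proof.
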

\begin{proof}
This is contained in the proof of \cite[Proposition~3.7]{LPS23}. The 'In particular' part follows by induction from the previously mentioned properties of path metrics on trees.
\end{proof}

Let $\gamma = (x_n)$ be an infinite path. As for finite paths we define its length by
$$L_w(\gamma) = \sum_{n = 1}^\infty w(x_n,x_{n+1}) \in [0,\infty].$$
%
% Moreover, we let 
% %
% $$d_w(x,\partial_{d_w}X) = \inf\{d_w(x,y) \mid y \in \partial_{d_w} X\},$$
% %
% with the convention $d_w(x,\partial_{d_w}X) = \infty$ if $\partial_{d_w}X = \emptyset$. 

\begin{proposition}\label{proposition:distance to boundary trees}
 Let $d_w$  be a path metric as above. The following assertions are equivalent.
 \begin{enumerate}[(i)]
  \item $\partial_{d_w} X = \emptyset$.
  \item All infinite paths have infinite lenght with respect to $w$.
  \item All balls with respect to $d_w$ are finite. 
 \end{enumerate}
 Moreover, for $x \in X$ we have  
 $$d_w(x,\partial_{d_w}X) = \inf \{L_w (\gamma_x) \mid \gamma_x \text{ an infinite path starting at }x\}.$$
\end{proposition}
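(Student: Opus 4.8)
The goal is to prove the equivalence of (i), (ii), (iii) together with the distance formula. I would organize the proof around a cycle of implications for the equivalence, and then derive the distance formula separately, exploiting the monotone-path structure from Lemma~\ref{lemma:monotone paths}.

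\textbf{Plan for the equivalences.} The implication (iii) $\Rightarrow$ (i) is essentially free: a pseudo metric space in which all balls are finite is complete, since every Cauchy sequence is eventually confined to a finite ball and hence eventually constant (recall $d_w$ is a genuine metric inducing the discrete topology, so distinct points are at positive distance). Completeness gives $\overline{X}^{d_w} = X$, i.e. $\partial_{d_w} X = \emptyset$. For the contrapositive of (ii) $\Rightarrow$ (iii), suppose some ball $B^{d_w}_R(o)$ is infinite. Since $b$ is locally finite and is a tree rooted at $o$, an infinite ball forces an infinite path emanating from $o$ staying within the ball (a König's-lemma argument on the locally finite rooted tree, organized by the ancestor relation $\wedge$); along such a monotone path the partial lengths are bounded by $R$, so by Lemma~\ref{lemma:monotone paths}'s length formula the path has finite $w$-length, contradicting (ii). The remaining implication (i) $\Rightarrow$ (ii) I would also prove contrapositively: an infinite path $(x_n)$ of finite $w$-length that is injective can be thinned, using the monotonicity available on trees, to a monotone path ($x_n \wedge x_m = x_m$ for $m \ge n$), whose tail sums $\sum_{k=m}^{n-1} w(x_k,x_{k+1})$ form a Cauchy sequence by the length formula; this produces a non-convergent Cauchy sequence (its limit cannot be a point of $X$ since the path is injective and leaves every finite set), hence a boundary point, so $\partial_{d_w} X \neq \emptyset$.

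\textbf{Plan for the distance formula.} Write $F(x)$ for the infimum on the right-hand side. For the inequality $d_w(x,\partial_{d_w}X) \le F(x)$, take any infinite path $\gamma_x = (x = y_0, y_1, \dots)$; after passing to a monotone tail as in Lemma~\ref{lemma:monotone paths} its vertices form a Cauchy sequence converging to some boundary point $\xi$, and the additivity of $d_w$ along monotone paths on trees gives $d_w(x,\xi) = L_w(\gamma_x)$ (or $\le$ after the thinning), so $d_w(x,\partial_{d_w}X) \le L_w(\gamma_x)$; taking the infimum yields the bound. For the reverse inequality $d_w(x,\partial_{d_w}X) \ge F(x)$, take any $\xi \in \partial_{d_w} X$; Lemma~\ref{lemma:monotone paths} supplies a monotone injective path from $x$ toward $\xi$ realizing $d_w(x,\xi)$ as the total $w$-length of that infinite path, so $d_w(x,\xi) \ge F(x)$, and the infimum over $\xi$ gives the claim.

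\textbf{Main obstacle.} The delicate point is the thinning of a general infinite injective path of finite length into a monotone one without increasing (and ideally preserving, in the limit) its relevant length, together with the careful identification of the limit with a genuine boundary point rather than a point of $X$. This is where the tree structure, the additivity/decomposition identities for $d_w$ along the ancestor relation, and Lemma~\ref{lemma:monotone paths} must be combined precisely; the finiteness/König's-lemma step in (ii) $\Rightarrow$ (iii) is the other place requiring care, since it relies crucially on local finiteness of $b$ and on organizing the infinite ball by generations via $\wedge$.
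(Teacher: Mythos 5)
Your treatment of the equivalences (i)--(iii) is sound and in fact more self-contained than the paper, which simply cites \cite[Appendix~A]{HKMW} for this part: the finite-balls-implies-complete step, the K\"onig's-lemma extraction of a finite-length ray from an infinite ball (using that balls around a point in a tree are connected subtrees), and the Cauchy-sequence construction of a boundary point from a finite-length path are all workable. One caveat: assertion (ii) quantifies over \emph{all} infinite paths, not just injective ones, so in your contrapositive of (i) $\Rightarrow$ (ii) you must first argue that a finite-length path cannot visit any vertex infinitely often (local finiteness plus $w>0$ on edges gives a uniform positive cost per departure from a fixed vertex), hence leaves every finite set; after that the Cauchy argument goes through with no injectivity or monotonicity at all, exactly as in the paper. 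Your ``thinning to a monotone tail'' is both unnecessary there and not what Lemma~\ref{lemma:monotone paths} provides: that lemma constructs a monotone path converging to a \emph{given} boundary point, it does not thin arbitrary paths.

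The genuine gap is in the inequality $d_w(x,\partial_{d_w}X) \ge F(x)$. You write that for $\xi \in \partial_{d_w}X$, Lemma~\ref{lemma:monotone paths} ``supplies a monotone injective path from $x$ toward $\xi$ realizing $d_w(x,\xi)$ as the total $w$-length.'' It does not: the path $(y_n)$ it supplies converges to $\xi$ but starts at some vertex having nothing to do with $x$, and whether $d_w(x,\xi)$ is realized by an infinite path \emph{starting at $x$} is precisely what must be proved. The paper's proof spends its entire second half on this point: either $x$ lies on $(y_n)$, in which case the tail from $x$ works; or it does not, in which case one chooses $m$ with $d(x,o)\le d(y_m,o)$, takes the unique injective path $\gamma'$ from $x$ to $y_m$, checks --- using the monotonicity of $(y_n)$ and the combinatorial-distance comparison --- that $\gamma'$ meets the tail $(y_n)_{n>m}$ only in $y_m$, and concludes that the concatenation is a geodesic, so that $d_w(x,y_n)=L_w(\gamma')+d_w(y_m,y_n)$ for $n>m$ and hence $d_w(x,\xi)$ equals the length of the concatenated infinite path. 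Without this splicing argument (or an equivalent one, e.g.\ via the eventual stabilization of $x\wedge y_n$ as $n\to\infty$), the inequality $d_w(x,\xi)\ge F(x)$ is unsupported, and it is the crux of the distance formula.
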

\begin{proof}
The statement on completeness is a  consequence of a more general theorem. On locally finite graphs (i), (ii) and (iii) are equivalent for path metrics, see \cite[Appendix~A]{HKMW}. This implies  the formula for $d_w(x,\partial_{d_w}X)$ whenever $\partial_{d_w}X = \infty$.

Now assume $\partial_{d_w}X \neq \emptyset$ and let 
$$I_x := \inf \{L_w (\gamma_x) \mid \gamma_x \text{ an infinite path starting at }x\}.$$

$d_w(x,\partial_{d_w}X) \leq I_x:$ Let $\gamma_x = (x_n)$ be an infinite path starting in $x$. Without loss of generality we can assume $L_w(\gamma_x) < \infty$. Then $\gamma_x$ must eventually leave every finite set. Moreover, by triangly inequality we have for $n \geq m$
$$d_w(x_n,x_m) \leq \sum_{k = m}^{n-1} w(x_k,x_{k+1}) \leq \sum_{k = m}^{\infty}w(x_k,x_{k+1}),$$
which tends to $0$, as $m \to \infty$. Hence, $(x_n)$ is $d_w$-Cauchy with limit $y \in \overline{X}^{d_w}$. Since the sequence eventually leaves every finite set, we obtain $y \in \partial_{d_w}X$ and
$$d_w(x,y) = \lim_{n \to \infty} d_w(x,x_n) \leq \lim_{n \to \infty}   \sum_{k = 1}^{n-1} w(x_k,x_{k+1}) = L_w(\gamma_x). $$
This shows $d_w(x,\partial_{d_w}X) \leq I_x$.

$d_w(x,\partial_{d_w}X) \geq I_x$: Let $\varepsilon > 0$. Choose $y \in \partial_{d_w} X$ with $d_w(x,\partial_{d_w}X) \geq d_w(x,y) - \varepsilon$. Let $\gamma = (y_n)$ be a path converging to $y$ with $y_n \wedge y_m = y_m$ for $m \geq n$ as in Lemma~\ref{lemma:monotone paths}.

Case 1: $x = y_m$ for some $m \in \N$. 

Let $m \in \N$ with $x = y_m$ and consider the path $\gamma_x = (y_m,y_{m+1},\ldots)$. Then 
$$d_w(x,y) = \lim_{n \to \infty} d_w(y_m,y_n) = \lim_{n \to \infty} \sum_{k = m}^{n-1} w(x_k,x_{k+1}) = L_w(\gamma_x).$$

Case 2: $x$  is not contained in $\gamma$. 
 
The sequence $(y_n)$ leaves every finite set. Hence, there exists $m \in \N$ with $d(x,o) \leq d(y_m,o)$. Let $ \gamma' = (x,x_1,\ldots,x_n,y_m)$ be the shortest path connecting $x$ and $y_m$. Since $d(x,o) \leq d(y_m,o)$ and since $y_n \wedge y_m = y_m$ for $n \geq m$, this path does not pass through any other $y_n,  n > m$. Hence,  $\gamma_x = (x,x_1,\ldots,x_n,y_{m},y_{m+1},\ldots)$ is a geodesic and we obtain 
$$d_w(x,y) = \lim_{n \to \infty} d_w(x,y_n) = L_w(\gamma') +  \lim_{n \to \infty} d_w(y_m,y_n)  = L_w(\gamma_x). $$
Combining both cases yields the claim.
\end{proof}

 \subsection{Spherically symmetric trees} Let $b$ be as in the previous subsection.   We additionally assume that $b$ is a {\em spherically symmetric tree} with branching numbers $k\colon \mathbb N_0 \to \mathbb N$, i.e., $\deg_-(x) = 1$ for all $x \in X \setminus \{o\}$ and $\deg_+(x) = k(n)$ if $d(x,o) = n$. Then $\deg(x) = k(n) + 1$ if $n = d(x,o) \geq 1$ and $\deg(o) = k(0)$.   Furthermore, we let $m \colon X \to (0,\infty)$ be spherically symmetric and consider the weights
 $$w(x,y) = \frac{m(x)^{1/p}}{\deg(x)^{1/p}} \wedge  \frac{m(y)^{1/p}}{\deg(y)^{1/p}},$$
 which were introduced Example~\ref{example:path metrics}. We now compute all the quantities relevant for applying our main results (more precisely their culmination in Corollary~\ref{corollary:corollary  3}) to the function $\delta_{d_w}$.

We fix $x \in X$ with $d(x,o) = n \in \N$. The case $n = 0$ can be treated similarly but will not really matter in computations. Let $y \in X$ with $d(y,o) = n + 1$. Then 
 $$w(x,y) = \frac{m(n)^{1/p}}{(k(n)+ 1) ^{1/p}} \wedge  \frac{m(n+1)^{1/p}}{(k(n+1) + 1)^{1/p}} =: \alpha(n). $$
The spherical symmetry implies that a shortest infinite path  $\gamma_x = (x_0,x_1,\ldots)$ with respect to $w$ starting in $x$ always increases its combinatorial distance to the root, i.e., $d(x_{k+1},o) = d(x_k,o) + 1$ for all $k \in \N$. Iterating this yields $d(x_k,o) = d(x,o) + k = n + k$ and our formula for $w$ yields
$$L_w(\gamma_x) =   \sum_{k = 0}^\infty w(x_k,x_{k+1}) = \sum_{k = n}^\infty  \alpha(k).  $$
Using Proposition~\ref{proposition:distance to boundary trees} this shows $\partial_{d_w} X \neq \emptyset$ if and only if $\alpha \in \ell^1(\N)$. In particular, we obtain the following general result.

\begin{proposition}\label{prop:h=h_0 trees}
Let $1 < p < \infty$. Assume the situation described above with $\alpha \not \in \ell^1(\N)$. Then $H^{1,p}(X,m) = H_0^{1,p}(X,m)$. 
\end{proposition}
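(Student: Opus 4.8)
The plan is to deduce this from Corollary~\ref{corollary:H = H0}, which asserts that the existence of a $p$-intrinsic pseudo metric with finite balls forces $H^{1,p}(X,m) = H^{1,p}_0(X,m)$. The natural candidate metric is $d_w$ itself: by construction in Example~\ref{example:path metrics}, the weight
$$w(x,y) = \frac{m(x)^{1/p}}{\deg(x)^{1/p}} \wedge \frac{m(y)^{1/p}}{\deg(y)^{1/p}}$$
satisfies $\frac{1}{m(x)}\sum_{y} b(x,y) w(x,y)^p \leq 1$, so $d_w$ is automatically $p$-intrinsic. Thus the only thing I need to verify is that $d_w$ has finite balls.

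First I would translate the hypothesis $\alpha \notin \ell^1(\N)$ into a statement about path lengths. The computation preceding the proposition shows that a shortest infinite path $\gamma_x$ starting at any $x$ with $d(x,o) = n$ has length $L_w(\gamma_x) = \sum_{k=n}^\infty \alpha(k)$. Since $\alpha \notin \ell^1(\N)$, every such tail sum diverges, so the shortest infinite path out of $x$ has infinite $w$-length. Because any infinite path has $w$-length at least that of the shortest one (this is exactly what the infimum formula in Proposition~\ref{proposition:distance to boundary trees} encodes), it follows that every infinite path has infinite length. This is precisely condition (ii) in Proposition~\ref{proposition:distance to boundary trees}.

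Next I would invoke the equivalence of (ii) and (iii) in Proposition~\ref{proposition:distance to boundary trees}: all infinite paths having infinite $w$-length is equivalent to all $d_w$-balls being finite. Hence $d_w$ has finite balls. Combining this with $p$-intrinsicality, Corollary~\ref{corollary:H = H0} immediately gives $H^{1,p}(X,m) = H^{1,p}_0(X,m)$, as desired.

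The argument is essentially a bookkeeping chain linking the three earlier results, so there is no serious obstacle; the one point requiring mild care is the reduction from \emph{all} infinite paths to the \emph{shortest} spherically increasing one. The spherical symmetry guarantees that a length-minimizing infinite path may be taken to strictly increase combinatorial distance to $o$, and the infimum description of $d_w(x,\partial_{d_w}X)$ from Proposition~\ref{proposition:distance to boundary trees} ensures no other infinite path can be shorter. Once this is in place, the divergence of $\sum_k \alpha(k)$ transfers uniformly to every infinite path, and the rest follows formally.
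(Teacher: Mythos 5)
Your proof is correct and takes essentially the same route as the paper: both translate $\alpha \notin \ell^1(\N)$ into finiteness of $d_w$-balls via the equivalences of Proposition~\ref{proposition:distance to boundary trees} and then apply Corollary~\ref{corollary:H = H0} to the $p$-intrinsic path metric $d_w$. The only (cosmetic) difference is that the paper passes through assertion (i), emptiness of $\partial_{d_w}X$, whereas you go from (ii) to (iii) directly.
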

\begin{proof}
 As discussed, $\alpha \not\in \ell^1(\N)$ leads to $\partial_{d_w} X = \emptyset$, which according to Proposition~\ref{proposition:distance to boundary trees} is equivalent to $d_w$-balls being finite. We infer $H^{1,p}(X,m) = H_0^{1,p}(X,m)$ from Corollary~\ref{corollary:H = H0}.
\end{proof}

Now we assume the latter case $\alpha \in \ell^1(\N)$. Letting  $D = \sum_{k=1}^\infty \alpha(k)$ we obtain 
$$\delta_{d_w}(x) = d_w(x, \partial_{d_w} X) \wedge D = \sum_{k=n}^\infty \alpha(k),  $$
showing that $\delta_{d_w}$ is spherically symmetric. Since $\delta_{d_w}(n) \to 0$, for $n \to \infty,$ the set $\{z \in X  \mid \delta_{d_w}(z) \geq \varepsilon\}$ is finite for any $\varepsilon > 0$. For $y \sim x$ we either have $d(y,o) = n - 1$ or $d(y,o) = n+1$. Since  $\delta_{d_w}(n) \leq \delta_{d_w}(n-1)$ and
$$\frac{\delta_{d_w}(n)}{\delta_{d_w}(n + 1)} =  \frac{\alpha(n) + \delta_{d_w}(n+1)}{\delta_{d_w}(n + 1)} = 1 + \frac{\alpha(n)}{\delta_{d_w}(n + 1)}, $$
we have
$$\sup_{y,z \in X, y\sim z} \frac{\delta_{d_w}(y)}{\delta_{d_w}(z)} < \infty$$
if and only if $\sup_n \alpha(n)/\delta_{d_w}(n+1) < \infty$. 

Proposition~\ref{prop:h=h_0 trees} shows that $H^{1,p}(X,m) \neq H^{1,p}_0(X,m)$ implies $m$ being small relative to $k$. Hence, for simplicity  we assume from now on that $n \mapsto m(n)$ is decreasing and $n \mapsto k(n)$ is increasing, which leads to $\alpha(n) = m(n+1)^{1/p}/(k(n+1) + 1)^{1/p}$. The gradient of $\delta_{d_w}$ is then given by
\begin{align*}
 |\nabla \delta_{d_w}|^p(n) &= \frac{k(n)}{m(n)}|\delta_{d_w}(n) - \delta_{d_w}(n+1)|^p  +  \frac{1}{m(n)}|\delta_{d_w}(n) - \delta_{d_w}(n-1)|^p\\
 &= \frac{k(n)\alpha(n)^p}{m(n)} +   \frac{\alpha(n-1)^p}{m(n)}\\
 &= \frac{k(n)m(n+1)}{m(n)(k(n+1) + 1)} + \frac{1}{k(n) + 1}, 
\end{align*}
and we have
\begin{align*}
 \mathcal L \delta_{d_w}(n) &= \frac{k(n)}{m(n)} (\delta_{d_w}(n) - \delta_{d_w}(n+1))^{p-1} - \frac{1}{m(n)} (\delta_{d_w}(n-1) - \delta_{d_w}(n))^{p-1}\\
 &= \frac{k(n)\alpha(n)^{p-1}}{m(n)} - \frac{\alpha(n-1)^{p-1}}{m(n)}\\
 &=  \frac{k(n)m(n+1)^{(p-1)/p}}{m(n)(k(n+1) + 1)^{(p-1)/p}} -  \frac{1}{(k(n) + 1)^{(p-1)/p}m(n)^{1/p}}.
\end{align*}
For the first equality above we used that $n \mapsto \delta_{d_w}(n)$ is  decreasing.  
% 
% 
% To simplify computations we now assume  In this case
% %
% $$\alpha(n) =  \frac{m(n+1)^{1/p}}{(k(n+1) + 1)^{1/p}},$$
% %
% the gradient simplifies to 
% %
% $$|\nabla \delta_{d_w}|^p(n) = ,$$
% %
% and for the Laplacian we obtain 
% %
% $$ \mathcal L \delta_{d_w}(x) =.$$
%
% \begin{align*}
% \mathcal L \delta_{d_w}(x) &= \frac{k(n)}{m(n)}(\delta_{d_w}(n) - \delta_{d_w}(n+1))|\delta_{d_w}(n) - \delta_{d_w}(n+1)|^{p-2} +  \frac{1}{m(n)}(\delta_{d_w}(n) - \delta_{d_w}(n-1))|\delta_{d_w}(n) - \delta_{d_w}(n-1)|^{p-2}\\
% &= 
% \end{align*}¨

We now apply Corollary~\ref{corollary:corollary  3} in two concrete examples, where $k$ and $m$ are polynomial, respectively exponential functions. Given two function $f,g \colon \N \to \R$ we say {\em $f(n) \asymp g(n)$ for $n \geq K$} if there exist constants $C,D > 0$ such that $C f(n) \leq g \leq D f(n)$ for all $n \geq K$.

\begin{example}[Polynomial growth]
Let $1 < p < \infty$. Let $\gamma  > 0 > \eta$ and assume $k$ is increasing with $k(n) \asymp n^\gamma$ for $n \geq 1$ and  $m$ is decreasing with $m(n) \asymp n^\eta$ for $n \geq 1$. If $\gamma \leq \eta + p$, then $H^{1,p}(X,m) = H^{1,p}_0(X,m)$. If $\gamma > \eta + p$, then there exist $C> 0$ such that 
$$C \sum_{n = 1}^\infty A |f|^p (n) n^{\gamma - p}  \leq  \mathcal  Q(f)  + \sum_{n = 1}^\infty A |f|^p (n) n^{\eta} + f(0)^p m(0)$$
for all $f \in  H^{1,p}_0(X,m)$. In particular,  $f \in H^{1,p}_0(X,m)$ if and only if $f \in \mathcal D$ and 
$$\sum_{n = 1}^\infty A |f|^p (n) n^{\gamma - p} < \infty.$$
\end{example}
\begin{proof}
Our assumptions yield $\alpha(n) \asymp \frac{1}{n^{(\gamma - \eta)/p}}$ for $n \geq 1$. Hence, we have $\alpha  \in \ell^1(\N)$ 
% 
% 
% %
% $$\sum_{l=1}^\infty \alpha(l) = \sum_{l=1}^\infty \frac{m(l+1)^{1/p}}{(k(l+1) + 1)^{1/p}}  \asymp \sum_{l=1}^\infty \frac{1}{l^{(\gamma - \eta)/p}} < \infty$$
%
if and only if $\gamma > \eta + p$.  In this case, we can estimate 
$$\delta_{d_w}(n) = \sum_{l=n}^\infty \alpha(l) \asymp   \sum_{l=n}^\infty \frac{1}{l^{(\gamma - \eta)/p}} \asymp \frac{1}{n^{(\gamma - \eta)/p -1 }},  $$
which implies 
$$\sup_n \frac{\alpha(n)}{\delta_{d_w}(n+1)} < \infty.   $$
For the gradient we obtain 
\begin{align*}
 |\nabla \delta_{d_w}|^p(n) &\asymp \frac{n^\gamma (n + 1)^\eta}{n^\eta ((n+1)^\gamma + 1)} + \frac{1}{n^\gamma + 1} \asymp 1. 
\end{align*}
For estimating the Laplacian we find constants  $C,C',D > 0$ such that for $n \in \N$ we have
\begin{align*}
 \mathcal L \delta_{d_w}(n) &\geq C \frac{n^\gamma (n+1)^{\eta (p-1)/p}}{n^\eta((n+1)^\gamma + 1)^{(p-1)/p}} -  D \frac{1}{(n^\gamma + 1)^{(p-1)/p}n^{\eta/p}}\\
 &\geq C' n^{(\gamma - \eta)/p} - D.
\end{align*}
Since $\gamma > \eta + p$, this shows  $\mathcal L \delta_{d_w} \geq 0$ outside some finite set. Now the claims follow from Corollary~\ref{corollary:corollary  3} and our discussion prior to this example.
\end{proof}

\begin{example}[Exponential growth]
 Let $\gamma > 1 \geq  \eta > 0$ and assume $k$ is increasing with $k(n) \asymp \gamma^n$  for $n \geq 0$ and $m$ is decreasing with $m(n) \asymp \eta^n$ for $n \geq 0$. Then there exists $C > 0$ such that 
 $$C \sum_{n = 0}^\infty A |f|^p (n)  \gamma^n  \leq  \mathcal  Q(f)  + \sum_{n = 0}^\infty A |f|^p (n) \eta^{n}$$
 for all $f \in H^{1,p}_0(X,m)$.  Moreover,  $f \in H^{1,p}_0(X,m)$ if and only if $f \in \mathcal D$ and 
$$\sum_{n = 1}^\infty A |f|^p (n) \gamma^n < \infty.$$
 
\end{example}

\begin{proof}
 Let $\xi = (\eta/\gamma)^{1/p}$ such that $\alpha(n) \asymp \xi^n$ for $n \geq 0$. Since $\eta < \gamma$, we obtain  $\alpha \in \ell^1(\N)$ and 
 $$\delta_{d_w}(n) \asymp \sum_{l=n}^\infty \frac{\eta^{l/p}}{(\gamma^l + 1)^{1/p}} \asymp \frac{\xi^n}{1-\xi}, \quad n \geq 0. $$
 This implies $\sup_n \frac{\alpha(n)}{\delta_{d_w}(n+1)} < \infty$. For the gradient of $\delta_{d_w}$ we obtain 
 \begin{align*}
  |\nabla \delta_{d_w}|^p(n) &\asymp \frac{\gamma^n \eta^{n+1}}{\eta^n (\gamma^{n+1} + 1)} + \frac{1}{\gamma^n + 1} \asymp 1, \quad n \geq 1. 
 \end{align*}
For estimating the Laplacian we find constants $C,C',D,D'  > 0$ such that  for $n \geq 1$
 \begin{align*}
    \mathcal L \delta_{d_w}(n) &\geq C \frac{\gamma^n \eta^{(n+1)(p-1)/p}}{\eta^n (\gamma^{n+1} + 1)^{(p-1)/p}} - D \frac{1}{(\gamma^n + 1)^{(p-1)/p} \eta^{n/p}}\\
    &\geq C' \xi^{-n}  - \frac{D'}{\gamma^n} \xi^{-n}.
 \end{align*}
 Since $\xi < 1$ and $\gamma > 1$, we infer $\mathcal L   \delta_{d_w} \geq 0$ outside a finite set. Now the claims follow from Corollary~\ref{corollary:corollary  3} and our discussion prior to this example.
\end{proof}

\begin{remark}
 Similar results hold true for other classes of spherically symmetric graphs. Most notably, spherically symmetric anti-trees can be treated as above. We  briefly recall their definition and leave the details to the reader. 
 
 A connected graph $b$ over $X$ with fixed $o \in X$ and $b(x,y) \in  \{0,1\}$ for all $x,y \in X$ is called {\em spherically symmetric anti-tree} if $x \sim y$ whenever $x \in S_n$ and $y \in S_{n + 1}$ or $y \in S_n$ and $x \in S_{n+1}$ for some $n \in \N_0$.  In other words, anti-trees are those connected graphs in which all points from $S_n$ are connected to all points from $S_{n+1}$. They possess the largest number of edges between $S_n$ and $S_{n+1}$ as possible (namely $|S_n| \cdot |S_{n+1}|$). This is opposite to connect trees, who possess the least number of edges between $S_{n}$ and $S_{n+1}$ without loosing  connectedness (namely $|S_{n+1}|$). The anti-tree with $|S_n| = n + 1, n \geq 0,$ appeared in \cite{DK88,Web10} as an example.  General anti-trees were introduced in \cite{Woj11} to provide examples of stochastically incomplete graphs of  polynomial volume growth with respect to the combinatorial metric. 
 
 Spherically symmetric anti-trees satisfy $\deg_+(n) = |S_{n + 1}|$ for $n \geq 0$ and $\deg_-(n) =  |S_{n-1}|$ for $n \geq 1$.  Equipping them with a spherically symmetric $m$ and the $p$-intrinsic metric $d_w$ discussed above, $\partial_{d_w} X$ consists of at most one point (use that points in consecutive spheres are connected) and for $n \geq 1$ we have
 $$d(n,\partial_{d_w}X) = \sum_{l = n}^\infty \frac{m(l)^{1/p}}{ (|S_{l-1}|  + |S_{l + 1}|)^{1/p}} \wedge  \frac{m(l+1)^{1/p}}{ (|S_{l}|  + |S_{l + 2}|)^{1/p}}.   $$
 With this at hand similar results as for trees can be established using the formulas for the norm of the gradient and for the Laplacian on spherically symmetric graphs. We refrain from giving further details. 
\end{remark}

\bibliographystyle{plain}
 
\bibliography{literatur}

\end{document}